\DeclareSymbolFont{cyrletters}{OT2}{wncyr}{m}{n}
\DeclareMathSymbol{\Sha}{\mathalpha}{cyrletters}{"58}
\newcommand{\defi}[1]{\textsf{#1}} 
\newcommand{\Aff}{\mathbb{A}}
\newcommand{\C}{\mathbb{C}}
\newcommand{\F}{\mathbb{F}}
\newcommand{\PP}{\mathbb{P}}
\newcommand{\Q}{\mathbb{Q}}
\newcommand{\Qbar}{{\overline{\Q}}}
\newcommand{\kbar}{{\overline{k}}}
\newcommand{\Kbar}{{\overline{K}}}
\newcommand{\Fbar}{{\overline{\F}}}
\DeclareMathOperator{\Char}{char}
\DeclareMathOperator{\Gal}{Gal}
\newcommand{\dyn}{{\operatorname{dyn}}}
\newcommand{\To}{\longrightarrow}
\newcommand{\Union}{\bigcup} 
\newtheorem{theorem}{Theorem}[section]
\newtheorem{lemma}[theorem]{Lemma}
\newtheorem{corollary}[theorem]{Corollary}
\newtheorem{proposition}[theorem]{Proposition}
\theoremstyle{definition}
\theoremstyle{remark}
\newtheorem{remark}[theorem]{Remark}
\g@addto@macro\bfseries{\boldmath} 
\begin{document}

\title{Gonality of dynatomic curves and \\ strong uniform boundedness of preperiodic points}
\subjclass[2010]{Primary 37P35; Secondary 11G30, 14H51, 37P05, 37P15}
\keywords{Gonality, dynatomic curve, preperiodic point}
\author{John R. Doyle}
\address{Department of Mathematics \& Statistics, Louisiana Tech University, Ruston, LA 71272, USA}
\email{jdoyle@latech.edu}
\urladdr{\url{http://www2.latech.edu/~jdoyle/}}

\author{Bjorn Poonen}
\thanks{The second author was supported in part by National Science Foundation grants DMS-1069236 and DMS-1601946 and Simons Foundation grants \#402472 (to Bjorn Poonen) and \#550033.}
\address{Department of Mathematics, Massachusetts Institute of Technology, Cambridge, MA 02139-4307, USA}
\email{poonen@math.mit.edu}
\urladdr{\url{http://math.mit.edu/~poonen/}}

\date{December 30, 2018}

\begin{abstract}
Fix $d \ge 2$ and a field $k$ such that $\Char k \nmid d$.
Assume that $k$ contains the $d$th roots of $1$.
Then the irreducible components of the curves over $k$
parameterizing preperiodic points of polynomials of the form $z^d+c$
are geometrically irreducible and have gonality tending to $\infty$.
This implies the function field analogue of the 
strong uniform boundedness conjecture for preperiodic points of $z^d+c$.
It also has consequences over number fields:
it implies strong uniform boundedness
for preperiodic points of bounded eventual period,
which in turn reduces the full conjecture for preperiodic points 
to the conjecture for periodic points.
\end{abstract}

\maketitle

\section{Introduction}\label{S:introduction}

\subsection{Dynatomic curves}

Fix an integer $d \ge 2$.
Let $k$ be a field such that $\Char k \nmid d$.
View $f = f_c \colonequals z^d+c$ as a polynomial in $z$ with
coefficients in $k[c]$.
Let $f^n(z)$ be the $n$th iterate of $f$;
in particular, $f^0(z) \colonequals z$.
If $n$ and $m$ are nonnegative integers with $n>m$,
then any irreducible factor of $f^n(z)-f^m(z) \in k[z,c]$
defines an affine curve over $k$.
By a \defi{dynatomic curve} over $k$, 
we mean any such curve, or its smooth projective model.
Any $k$-point on such a curve yields $c_0 \in k$
equipped with a \defi{preperiodic} point in $k$,
that is, an element $z_0 \in k$ that under iteration of $x^d+c_0$
eventually enters a cycle; the length of the cycle is called
the \defi{eventual period}.
We consider two dynatomic curves to be different 
if the corresponding closed subschemes of $\Aff^2_k$ are distinct.
Section~\ref{S:classification of dynatomic curves} describes
all dynatomic curves in characteristic~$0$ explicitly.

\subsection{Gonality}

Let $\kbar$ be an algebraic closure of $k$.
Let $\mu_d = \{x \in \kbar : x^d=1 \}$.

For a curve $X$ over $k$, let $X_{\kbar} = X \times_k \kbar$.
If $X$ is irreducible, define the \defi{gonality} $\gamma(X)$ of $X$
as the least possible degree of a dominant rational map 
$X \dashrightarrow \PP^1_k$.
If $X$ is geometrically irreducible, 
define its \defi{$\kbar$-gonality} as $\gamma(X_{\kbar})$.

The following theorem, and its consequence, 
Theorem~\ref{T:uniform boundedness},
are our main results.

\begin{theorem}
\label{T:gonality}
Fix $d \ge 2$ and $k$ such that $\Char k \nmid d$.
Suppose that $\mu_d \subset k$.
\begin{enumerate}[\upshape (a)]
\item \label{I:geometrically irreducible}
Every dynatomic curve over $k$ is geometrically irreducible.
\item \label{I:gonality of dynatomic curves}
If the dynatomic curves over $k$ are listed in any order,
their gonalities tend to $\infty$.
\end{enumerate}
\end{theorem}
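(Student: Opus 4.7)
I would prove the two parts separately, relying on the classification of dynatomic curves from Section~\ref{S:classification of dynatomic curves}. For part (a), the classification parameterizes the geometric components of $V(f^n(z) - f^m(z)) \subset \Aff^2_{\kbar}$ by pairs consisting of a combinatorial ``dynatomic portrait'' describing the forward orbit of $z_0$ together with a character of $\mu_d$ recording the multiplier at the periodic cycle. The Galois group $\Gal(\kbar/k)$ permutes these geometric components via its action on such characters, and this action factors through $\Gal(k(\mu_d)/k)$. Under the hypothesis $\mu_d \subset k$, this quotient is trivial, so every geometric component is already defined over $k$; a $k$-irreducible dynatomic curve must therefore be a single geometric component.

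For part (b), the plan is to reduce to positive characteristic and apply the elementary inequality
\[
\#X(\F_q) \;\le\; \gamma(X) \cdot (q+1)
\]
valid for any geometrically irreducible smooth projective curve $X$ over $\F_q$ (pull $\F_q$-points back along a degree-$\gamma$ map $X \to \PP^1_{\F_q}$). I would fix a prime $\ell \nmid d$ at which every dynatomic curve has good reduction over a suitable base ring, and use that gonality cannot increase under specialization within a smooth flat family, so it suffices to lower-bound the gonality of the reductions over $\Fbar_\ell$. For each dynatomic curve $X_P$ attached to a portrait $P$, I would produce $\F_q$-points by counting pairs $(c_0, z_0) \in \F_q \times \F_q$ with $z_0$ realizing $P$ under $f_{c_0}$: since every element of $\F_q$ is automatically preperiodic under $f_{c_0}$, the $q^2$ such pairs distribute among the portraits realizable over $\F_q$, and for $q = q(P)$ chosen large enough in terms of $P$ one aims to make $\#X_P(\F_q)/(q+1)$ grow without bound as the combinatorial complexity of $P$ grows.

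The main obstacle is the quantitative point count: produce, for each portrait $P$ of growing combinatorial size, a choice of $q = q(P)$ and a lower bound on $\#X_P(\F_q)$ that outgrows $q+1$. This demands an averaging statement on the cycle and tree structure of $f_{c_0}$ acting on $\F_q$ as $c_0$ ranges over $\F_q$, controlled uniformly across all combinations of tail length $m$ and period $n$. A secondary subtlety is avoiding (or correcting for) primes $\ell$ at which distinct portraits collapse onto the same dynatomic curve after reduction, which must be ruled out in the choice of $\ell$.
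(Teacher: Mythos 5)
There are genuine gaps in both parts. For part (a), your argument rests on the classification of geometric components of $V(f^n(z)-f^m(z))$ by portraits and characters of $\mu_d$, but that classification (Theorem~\ref{T:classification of dynatomic curves}, due to Bousch, Lau--Schleicher, Morton, and Gao) is only known in characteristic~$0$; in characteristic $p$ it is precisely \emph{not} known whether the candidate factors $\Phi_n(\zeta^{-1}f^{m-1}(z),c)$ are irreducible, so you cannot start from a list of geometric components and ask how Galois permutes them. The paper instead proves geometric irreducibility in characteristic $p$ directly: by symbolic dynamics over $\F_q((t))$ with $t=c^{-1/d}$, the polynomial $f^n(z)-f^m(z)$ splits completely over $\F_q((t))$, so every point of a dynatomic curve above $c=\infty$ is an $\F_q$-point of ramification index $d$; each geometric component must contain such a point, and since the components of the smooth model are disjoint, each component is Galois-stable, hence the curve is geometrically irreducible. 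Your approach has no substitute for this in characteristic $p$.

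For part (b), the inequality $\#X(\F_q)\le \gamma(X)(q+1)$ is indeed the right tool (Ogg's method), but the ``main obstacle'' you name --- producing $\gg q$ rational points on $X_P$ from affine pairs $(c_0,z_0)\in\F_q^2$ realizing a given portrait --- is essentially the whole content of the theorem, and the averaging strategy you sketch is unlikely to close it: the $q^2$ pairs are spread over all realizable portraits, and for a fixed portrait of large $(m,n)$ there is no reason the count exceeds $O(q)$ with a growing constant. The paper's points come instead from the fiber above $c=\infty$: complete splitting over $\F_q((t))$ gives exactly $e/d$ rational points there, where $e=\deg(X\xrightarrow{c}\PP^1)$, so $\gamma(X)\ge e/(d(q+1))$, and the remaining issue is to show $e\to\infty$. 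That is handled by a finiteness trick specific to $\F_q$ (Lemma~\ref{L:low degree factors}): the coefficients of a monic degree-$e$ factor lie in $\F_q[c]$ with degrees bounded via the valuations $-1/d$ of the roots, so only finitely many dynatomic curves have bounded $e$. Finally, your proposed reduction of the characteristic-$0$ case modulo a prime $\ell$ runs into the unresolved problem of good and irreducible reduction of dynatomic curves (the subject of \cite{Doyle-et-al-preprint}); the paper avoids this entirely, proving the characteristic-$0$ case by Castelnuovo--Severi applied to $X_0(n)\xrightarrow{c}\PP^1$ (using the genus growth and the symmetric Galois group of that cover) and then climbing the towers $Y_1(m,n)^\zeta$ using ramification above $z=0$.
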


\begin{remark}
Part~\eqref{I:geometrically irreducible} of Theorem~\ref{T:gonality} 
can fail if $\mu_d \not\subset k$.
See Remark~\ref{R:not geometrically irreducible}.
\end{remark}

\begin{remark}
In proving Theorem~\ref{T:gonality} in positive characteristic,
we face the challenge that we do not know explicitly what the 
dynatomic curves are, since we are not sure whether the known factors
of the polynomials $f^n(z)-f^m(z)$ are irreducible.
Some partial results regarding the irreducibility of dynatomic curves 
in positive characteristic may be found in \cite{Doyle-et-al-preprint},
but we will not use them.
Instead, to overcome the difficulties, 
we use a novel argument that is specific to finite fields 
to prove that the degrees of dynatomic curves over the $c$-line
must tend to infinity even though we do not know precisely 
what these curves are;
see Section~\ref{S:gonality in characteristic p}.
\end{remark}

Let us now introduce notation for our next result.
Let $\mu(n)$ denote the M\"obius $\mu$-function.
Then $f^n(z)-z = \prod_{e|n} \Phi_e(z,c)$,
where
\begin{equation}
\label{E:definition of Phi}
	\Phi_n(z,c) \colonequals \prod_{e|n} (f^e(z)-z)^{\mu(n/e)} \in k[z,c].
\end{equation}
Let $Y_1^{\dyn}(n)$ be the curve defined by $\Phi_n(z,c)=0$ in $\Aff^2_k$,
and let $X_1^{\dyn}(n)$ be the normalization of its projective closure.
To simplify notation, we omit the superscript dyn from now on.
General points of $X_1(n)$ parametrize 
polynomials of the form $z^d+c$ equipped with a point of exact order $n$.

The morphism $(z,c) \mapsto (f(z),c)$ restricts to 
an order~$n$ automorphism of $Y_1(n)$, 
so it induces an order~$n$ automorphism $\sigma$ of $X_1(n)$.
The quotient of $X_1(n)$ by the cyclic group generated by $\sigma$
is called $X_0(n)$.
If $\Char k=0$, it is known that $X_1(n)$ is geometrically irreducible,
so $X_0(n)$ is too.

\begin{theorem}
\label{T:X_0}
Fix $d \ge 2$ and a field $k$ of characteristic~$0$.
Then 
\[
	\gamma(X_0(n)) > \left(\frac{1}{2} - \frac{1}{2d} - o(1) \right) n
\]
as $n \to \infty$.
In particular, $\gamma(X_0(n)) \to \infty$.
\end{theorem}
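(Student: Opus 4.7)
I would bound $\gamma(X_0(n))$ below via the Castelnuovo--Severi inequality. The map $c\colon X_0(n)\to\PP^1$ has degree $M_n=\tfrac{1}{n}\sum_{e\mid n}\mu(n/e)d^e\sim d^n/n$. For a degree-$\gamma$ map $\phi\colon X_0(n)\to\PP^1$, assuming the pair $(c,\phi)\colon X_0(n)\to\PP^1\times\PP^1$ is birational onto its image, Castelnuovo--Severi gives
\[
g(X_0(n)) \leq (M_n-1)(\gamma-1), \qquad \text{so}\qquad \gamma \geq \frac{g(X_0(n))}{M_n-1}+1.
\]
The complementary case, in which $c$ and $\phi$ factor through a common intermediate cover, forces $\gcd(M_n,\gamma)>1$ and can be handled by a separate argument yielding a bound on $\gamma$ at least as strong.

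The remaining task is the genus estimate $g(X_0(n))\geq\bigl(\tfrac{d-1}{2d}-o(1)\bigr)nM_n$. I would obtain this by Riemann--Hurwitz in two steps. First, analyze $c\colon X_1(n)\to\PP^1_c$ of degree $N_n:=nM_n=\deg_z\Phi_n$, whose ramification divisor has contributions from (a) the cusps above $c=\infty$, computed via a Puiseux--Newton polygon analysis of $\Phi_n(z,c)$ at the singular point of the plane-curve compactification, and (b) the finite bifurcation locus, governed by the $z$-discriminant $\mathrm{Disc}_z(\Phi_n)$. Morton-type explicit formulas express this discriminant in terms of multiplier polynomials of cycles and the orbit of the critical point $z=0$ of $f=z^d+c$; extracting the leading asymptotic terms should yield $g(X_1(n))\sim\tfrac{d-1}{2d}\,nN_n$. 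Second, descend via the degree-$n$ Galois cover $\pi\colon X_1(n)\to X_0(n)$: since $\sigma$ cyclically permutes the $n$ points of each $n$-cycle, $\sigma$ acts freely on $Y_1(n)$, and the ramification of $\pi$ is concentrated at cusps and contributes only $O(N_n)$ to the ramification divisor. Riemann--Hurwitz then gives $g(X_0(n))\sim g(X_1(n))/n\sim\tfrac{d-1}{2d}N_n$, which combined with Castelnuovo--Severi yields $\gamma \geq \bigl(\tfrac{d-1}{2d}-o(1)\bigr)n$.

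The main obstacle is the first Riemann--Hurwitz step: extracting the leading coefficient $\tfrac{d-1}{2d}$ from the finite ramification requires a delicate analysis of the contribution of the critical orbit $\{0,c,f(c),f^2(c),\ldots\}$ to $\mathrm{Disc}_z(\Phi_n)$, whose asymptotic behavior determines the constant in the final bound. The common-factor case in Castelnuovo--Severi is a secondary obstacle, expected to force $\gamma$ so large that it exceeds the target bound automatically.
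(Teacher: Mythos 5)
Your overall strategy is the paper's: play the putative minimal-degree map $\phi$ off against the degree-$M_n=D_0(n)\sim d^n/n$ map $c\colon X_0(n)\to\PP^1$ via Castelnuovo--Severi, using the genus asymptotic $g(X_0(n)) > \bigl(\tfrac12-\tfrac1{2d}-o(1)\bigr)d^n$. For that genus input the paper simply cites Morton's Theorem~13(d); your proposed re-derivation via Riemann--Hurwitz, the discriminant of $\Phi_n$, and descent along $X_1(n)\to X_0(n)$ is in the right spirit (and is essentially Morton's computation) but is only a sketch --- ``extracting the leading asymptotic terms should yield'' is not a proof, and $\sigma$ does not literally act freely on $Y_1(n)$ (there are points where the exact period is a proper divisor of $n$), though your $O(N_n)$ error allowance would absorb that.

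The genuine gap is your treatment of the complementary Castelnuovo--Severi case, where $k(c)$ and $k(\phi)$ fail to generate $k(X_0(n))$. Writing $e$ for the index of the compositum in $k(X_0(n))$, your observation that $e>1$ divides both $M_n$ and $\gamma$ yields essentially nothing (it only gives $\gamma\ge 2$), and applying Castelnuovo--Severi inside the compositum $F'$ is useless without a lower bound on $g(F')$, which is unavailable since the genus of a quotient curve can collapse. There is no generic ``separate argument yielding a bound at least as strong''; this case is exactly where the paper needs a substantive external input, namely that the monodromy group of $X_0(n)\stackrel{c}{\to}\PP^1$ is the full symmetric group $S_{D_0(n)}$ (Bousch, Lau--Schleicher, Morton, Schleicher). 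Because a point stabilizer in $S_{D_0(n)}$ is a maximal subgroup, there are no fields strictly between $k(c)$ and $k(X_0(n))$; hence either the compositum is everything (your main case) or $\phi$ factors through $c$, forcing $\deg\phi\ge D_0(n)\sim d^n/n$, which far exceeds the target bound. Without this primitivity statement your argument does not close.
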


\begin{remark}
Our definition of dynatomic curve does not include quotient curves 
such as $X_0(n)$, 
so the conclusion $\gamma(X_0(n)) \to \infty$ of Theorem~\ref{T:X_0} 
does not follow from Theorem~\ref{T:gonality}\eqref{I:gonality of dynatomic curves}.
In fact, our logic runs in the opposite direction: 
we use Theorem~\ref{T:X_0} in the proof of the characteristic~$0$
case of Theorem~\ref{T:gonality}\eqref{I:gonality of dynatomic curves}.
\end{remark}

\begin{remark}
Although the lower bound in Theorem~\ref{T:X_0} is linear in $n$,
the best upper bound we know, $\gamma(X_0(n)) \le (1+o(1)) d^n/n$
(see Proposition~\ref{P:properties of X_0}\eqref{I:D_0(n)}),
is exponential in $n$.
\end{remark}

To prove Theorem~\ref{T:X_0}, 
we use that $X_0(n)$ already has a morphism to $\PP^1$ of degree
lower than expected for its genus, namely $X_0(n) \stackrel{c}\to \PP^1$.
If it also had a morphism to $\PP^1$ of bounded degree, 
then the Castelnuovo--Severi inequality would make the genus of $X_0(n)$ 
smaller than it actually is, a contradiction.
See Section~\ref{S:gonality in characteristic 0} for details.

To prove Theorem~\ref{T:gonality}\eqref{I:gonality of dynatomic curves},
we use different arguments in characteristic~$0$ and characteristic~$>0$.

In characteristic~$0$, we use that each dynatomic curve dominates
$X_1(n)$ and hence also $X_0(n)$ for some $n$, 
so by Theorem~\ref{T:X_0} its gonality is large when $n$ is large;
this lets us reduce to proving a gonality lower bound for 
the dynatomic curves above $X_1(n)$ for each fixed $n$.
The latter curves for fixed $n$ come in towers 
and we use the Castelnuovo--Severi inequality to work our way up each tower.
See Section~\ref{S:gonality in characteristic 0}.

In characteristic~$p$, we prove that the irreducible components
of $f^n(z)-f^m(z)$ have large degree over the $c$-line,
and we use that to prove that over 
the finite field $\F_q \colonequals \F_p(\mu_d)$ 
their smooth projective models have so many $\F_q$-points over $c=\infty$
that their $\F_q$-gonalities must be large.
Finally, we use a result controlling how gonality of a curve changes
when the base field is enlarged.
See Section~\ref{S:gonality in characteristic p}.

\subsection{Uniform boundedness of preperiodic points}

The growth of gonalities of classical modular curves
implies the strong uniform boundedness theorem for 
torsion points on elliptic curves over function fields
(the function field analogue of Merel's theorem \cite{Merel1996});
see \cite{Nguyen-Saito1996preprint}*{Theorem~0.3}.
Similarly, from Theorem~\ref{T:gonality} we will deduce 
the following function field analogue of a case of 
the Morton--Silverman conjecture \cite{Morton-Silverman1994}*{p.~100}:

\begin{theorem}[Strong uniform boundedness theorem for preperiodic points over function fields]
\label{T:uniform boundedness}
Fix $d \ge 2$ and a field $k$ such that $\Char k \nmid d$.
Let $K$ be the function field of an integral curve over $k$.
Fix a positive integer $D$.
Then there exists $B=B(d,K,D)>0$ such that 
for every field extension $L \supseteq K$ of degree $\le D$ 
and every $c \in L$ not algebraic over $k$,
the number of preperiodic points of $z^d + c$ in $L$
is at most $B$.
If $k$ is finite, the same holds with the words 
``not algebraic over $k$'' deleted.
\end{theorem}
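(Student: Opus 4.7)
The plan is to extract uniform boundedness from Theorem~\ref{T:gonality} via the standard Frey-style template, the same one by which Nguyen--Saito \cite{Nguyen-Saito1996preprint} deduced the function-field analogue of Merel's theorem from gonality growth of the classical curves $X_1(N)$. Replacing $k$, $K$, $L$ by $k(\mu_d)$, $K(\mu_d)$, $L(\mu_d)$ only enlarges $D$ by the bounded factor $[k(\mu_d):k]$ and only increases the count of preperiodic points, so we may assume $\mu_d \subset k$ as required by Theorem~\ref{T:gonality}.

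Next, I would rewrite the quantity to be bounded in terms of dynatomic curves: a preperiodic point $z_0 \in L$ of $z^d+c$ has an exact preperiod $m \ge 0$ and eventual period $n \ge 1$, and then $(z_0, c)$ is an $L$-point of an irreducible component of the affine curve $f^{m+n}(z) - f^m(z) = 0$---that is, of a dynatomic curve. For each fixed type $(m,n)$ and fixed $c \in L$ there are at most $d^{m+n}$ preperiodic points of type $(m,n)$ in $L$ (by the degree in $z$), so it suffices to exhibit $N_0 = N_0(d, K, D)$ such that, under the hypotheses, no preperiodic point in $L$ has $m+n > N_0$; the bound $B$ can then be taken to be $\sum_{m+n \le N_0} d^{m+n}$.

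To produce $N_0$, I would argue by contradiction: assume there is a sequence of dynatomic curves $X_i$ with $m_i + n_i \to \infty$, each admitting an $L_i$-point $(z_i, c_i)$ with $[L_i : K] \le D$ and $c_i$ not algebraic over $k$. Let $C_0$ be the smooth projective model of $K$, of genus $g_0$, and let $C_i$ be the smooth projective $k$-curve with function field $L_i$; the $L_i$-point corresponds to a non-constant $k$-morphism $\phi_i \colon C_i \to X_i$, while $K \subseteq L_i$ induces $\pi_i \colon C_i \to C_0$ of degree $\le D$, so $g(C_i)$ is bounded in terms of $D$ and $g_0$ via Riemann--Hurwitz. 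Theorem~\ref{T:gonality}\eqref{I:geometrically irreducible} gives geometric irreducibility of each $X_i$, and \eqref{I:gonality of dynatomic curves} gives $\gamma(X_i) \to \infty$. A Castelnuovo--Severi argument, analogous to \cite{Nguyen-Saito1996preprint}*{proof of Theorem~0.3}, then bounds $\gamma(X_{i,\kbar})$ purely in terms of $D$, $g_0$, and $g(C_i)$, yielding a contradiction for large $i$.

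Finally, when $k$ is finite and $c \in L$ is algebraic over $k$, the element $c$ lies in the algebraic closure $k'$ of $k$ inside $L$, which is a finite extension of $k$ of degree bounded in terms of $K$ and $D$; every preperiodic point of $z^d + c$ in $L$ is then algebraic over $k(c) \subseteq k'$ and lies in $L$, hence in $k'$, giving at most $|k'|$ such points. This removes the need to assume $c$ not algebraic over $k$ in the finite-$k$ statement. The hard part is the Castelnuovo--Severi step: the bound on $\gamma(X_{i,\kbar})$ in terms of $D$, $g_0$, and $g(C_i)$ alone requires that $\pi_i$ and $\phi_i$ induce sufficiently independent subfields of $k(C_i)$, which must be checked carefully when the two maps could share factorizations.
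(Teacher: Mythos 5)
Your overall architecture matches the paper's: reduce to $\mu_d\subset k$, translate preperiodic points into $L$-points of dynatomic curves, note that bounding the type $(m,n)$ bounds the count (the paper uses $\sum_{m<n}\deg(f^n(z)-f^m(z))=nd^n$), invoke Theorem~\ref{T:gonality}\eqref{I:gonality of dynatomic curves}, and dispose of the algebraic-$c$ case over finite $k$ exactly as the paper does. The problem is the step you yourself flag as ``the hard part'': it is left unresolved, and the tool you propose for it is the wrong one. A Castelnuovo--Severi argument comparing $\pi_i\colon C_i\to C_0$ and $\phi_i\colon C_i\to X_i$ would indeed require the compositum $k(C_0)\cdot k(X_i)$ to be all of $k(C_i)$ (or a careful analysis of the intermediate field it generates), and there is no reason for that independence to hold; as written, the argument does not close.

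The missing observation is that no Castelnuovo--Severi inequality is needed at all: gonality is monotone under dominant morphisms. Since $\phi_i$ is a nonconstant morphism of curves over $k$, one has $k(X_i)\subseteq k(C_i)$ and hence $\gamma(X_i)\le\gamma(C_i)$ (\cite{Poonen2007-gonality}*{Proposition~A.1(vii)}, which the paper cites for exactly this purpose), while $\gamma(C_i)\le [L_i:K]\cdot\gamma(C_0)\le D\,\gamma(C_0)$, a bound depending only on $K$ and $D$. (The paper streamlines further by first replacing $K$ with $k(u)$, so that $\gamma(C_i)\le D$ outright.) This immediately contradicts $\gamma(X_i)\to\infty$ and replaces your entire third paragraph; note also that the contradiction should be phrased with the $k$-gonality $\gamma(X_i)$, not $\gamma(X_{i,\kbar})$, since Theorem~\ref{T:gonality}\eqref{I:gonality of dynatomic curves} concerns gonality over $k$ and a bound on the $\kbar$-gonality alone would be weaker. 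With that substitution your proof is correct and is essentially the paper's.
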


As far as we know, Theorem~\ref{T:uniform boundedness}
is the first theorem proving strong uniform boundedness of preperiodic points
for all members of a nontrivial algebraic family of maps over a global field.
See \cite{Doyle-et-al-preprint}*{Section~4} 
for some related results and arguments.

Another application of Theorem~\ref{T:gonality} is the following, 
which over \emph{number fields} provides a uniform bound
on preperiodic points having a bounded eventual period:

\begin{theorem}\label{T:number fields}
Fix integers $d \ge 2$, $D \ge 1$, and $N \ge 1$. 
Then there exists $B = B(d,D,N) > 0$ such that 
for every number field $K$ satisfying $[K : \Q] \le D$ and every $c \in K$, 
the number of preperiodic points of $z^d + c$ in $K$ 
with eventual period at most $N$ is at most $B$. 
\end{theorem}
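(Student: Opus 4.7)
The plan is to reduce Theorem~\ref{T:number fields} to a uniform bound on the tail length of $K$-rational preperiodic points, and then extract that bound from Theorem~\ref{T:gonality}.

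For the reduction, I would first note that the number of $K$-rational periodic points of $f_c = z^d + c$ of exact period $p$ is at most $d^p$, the degree of $f_c^p(z) - z$ in $z$, so in total there are at most $\sum_{p=1}^N d^p \le d^{N+1}$ periodic points of period $\le N$. Next, any preperiodic $z \in K$ with tail $m \ge 1$ satisfies $f_c(z) \in K$ with tail $m-1$, and the fiber $f_c^{-1}(w) = \{z : z^d = w - c\}$ has at most $d$ elements of $K$; inducting on tail shows that the number of $K$-rational preperiodic points of tail $\le M$ and eventual period $\le N$ is at most $(1 + d + \cdots + d^M) d^{N+1} \le d^{M+N+2}$. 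So the theorem reduces to producing $M = M(d, D, N)$ such that every preperiodic $z \in K$ of eventual period $\le N$ has tail at most $M$.

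To bound the tail, for each $m \ge 0$ and $1 \le p \le N$ let $X(m, p)$ denote the dynatomic curve over $\Q$ parametrizing pairs $(c, z)$ with $z$ preperiodic for $f_c$ of tail exactly $m$ and eventual period exactly $p$. A preperiodic $z \in K$ with these invariants produces a $K$-rational, and hence degree-$\le D$, point of $X(m, p)$. By Theorem~\ref{T:gonality}\eqref{I:gonality of dynatomic curves}, only finitely many dynatomic curves have gonality $\le 2D$, so in particular only finitely many $(m, p)$ in the family $\{(m, p) : m \ge 0, \, 1 \le p \le N\}$ do; choose $M$ such that $\gamma(X(m, p)) > 2D$ for all $m > M$, $p \le N$. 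The classical consequence of Faltings's theorem (Frey's theorem, via the symmetric power $\Sym^D X$) then implies that for each such $(m, p)$ the set of points of $X(m, p)(\Qbar)$ of degree $\le D$ over $\Q$ is finite.

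The main obstacle is lifting this per-curve finiteness to a uniform tail bound, because the sizes of the finite sets of degree-$\le D$ points could in principle grow with $m$. My plan is to exploit the infinite tower
\[
\cdots \longrightarrow X(m+1, p) \longrightarrow X(m, p) \longrightarrow \cdots \longrightarrow X_1(p),
\]
whose connecting maps $(c, z') \mapsto (c, f_c(z'))$ are finite of degree $d$: a $K$-rational preperiodic point $(c, z)$ of tail $m$ gives rise to an entire $K$-rational chain $(c, z), (c, f_c(z)), \ldots, (c, f_c^m(z))$ of degree-$\le D$ points on successive levels, and each lift one step up the tower requires the $d$-th root $(z - c)^{1/d}$ to lie in an extension of degree $\le D$. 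The argument concludes by combining this chain rigidity with the Faltings--Frey finiteness to force termination of degree-$\le D$ chains at a uniform height $M$, after which the reduction step yields the explicit bound $B(d, D, N) \le d^{M+N+2}$.
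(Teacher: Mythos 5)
Your setup is sound: the reduction to a uniform tail bound $M(d,D,N)$ is valid (a tail of length $m$ forces at least $m$ distinct preperiodic points, so bounding the count and bounding the tail are equivalent), and you correctly identify the crux, namely that Frey's theorem only gives \emph{finiteness} of the degree-$\le D$ locus on each curve $X(m,p)$ of gonality $>2D$, not emptiness. But the step you propose to close this gap --- ``combining this chain rigidity with the Faltings--Frey finiteness to force termination of degree-$\le D$ chains at a uniform height $M$'' --- is a statement of intent, not an argument, and as described it does not work. The observation that each lift up the tower requires a $d$-th root to lie in a degree-$\le D$ extension imposes no contradiction by itself: nothing in that condition prevents a single $c_0$ in a degree-$D$ field $K$ from having $K$-rational preperiodic points of arbitrarily long tail, with all the required $d$-th roots landing in $K$. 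What rules this out is a height argument: preperiodic points have bounded height, so by Northcott's theorem \cite{Northcott1950} each fixed $c\in\Qbar$ has only \emph{finitely many} preperiodic points of degree $\le D$ over $\Q$. Your proposal never invokes Northcott or any height bound, and without it the chain argument cannot terminate.

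The paper's proof supplies exactly this missing ingredient, and in a direction opposite to yours: rather than trying to show the high levels of the tower are eventually empty, it assumes infinitely many levels $m$ are populated and pushes all those points \emph{down} to a fixed level $m_0$ via $(z,c)\mapsto(f^{m-m_0}(z),c)$. Northcott guarantees that the composite map to the $c$-line has finite fibers on these sets, so infinitely many populated levels above $m_0$ force \emph{infinitely many} degree-$\le D$ points on some irreducible component $Y_1(m_0,n)^\zeta$; then Lemma~\ref{L:frey} gives $\gamma(Y_1(m_0,n)^\zeta)\le 2D$ for \emph{every} $m_0$, contradicting Theorem~\ref{T:gonality}\eqref{I:gonality of dynatomic curves}. (A secondary point: you apply the gonality theorem to the curves $X(m,p)=Y_1(m,p)$ over $\Q$, but Theorem~\ref{T:gonality} requires $\mu_d\subset k$, so one must work with the $\Q(\mu_d)$-irreducible components $Y_1(m,p)^\zeta$; this is routine but should be said.) To repair your write-up, replace the ``chain rigidity'' paragraph with the Northcott-plus-pushdown argument.
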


Theorems \ref{T:uniform boundedness} and~\ref{T:number fields} 
are proved in Section~\ref{S:uniform boundedness}.
Theorem~\ref{T:number fields} implies that 
the strong uniform boundedness conjecture for \emph{periodic} points
over number fields implies the strong uniform boundedness conjecture 
for \emph{preperiodic} points over number fields, as we now explain:

\begin{corollary}\label{C:preperiodic vs periodic}
Fix integers $d \ge 2$ and $D \ge 1$.
Suppose that there exists a bound $N=N(d,D)$
such that for every number field $K$ satisfying $[K : \Q] \le D$ 
and every $c \in K$, every periodic point of $z^d + c$ in $K$
has period at most $N$.
Then there exists a bound $B' = B'(d,D)$
such that for every number field $K$ satisfying $[K : \Q] \le D$ 
and every $c \in K$, 
the number of preperiodic points of $z^d + c$ in $K$
is at most $B'$.
\end{corollary}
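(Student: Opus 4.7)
The plan is to reduce the corollary to Theorem~\ref{T:number fields} by observing that the periodic-period hypothesis automatically bounds the eventual period of \emph{every} preperiodic point. No serious obstacle is anticipated here: the corollary is essentially a formal consequence of the preceding theorem together with the definition of eventual period recalled in Section~\ref{S:introduction}. The only thing to verify is that ``eventual period'' of a $K$-rational preperiodic point coincides with the period of an actual $K$-rational periodic cycle, to which the hypothesis applies.

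Concretely, I would set $N \colonequals N(d,D)$ as supplied by the hypothesis and define $B'(d,D) \colonequals B(d,D,N)$, where $B(d,D,N)$ is the bound produced by Theorem~\ref{T:number fields}. Now let $K$ be any number field with $[K:\Q]\le D$, let $c \in K$, write $f(z) = z^d+c$, and let $z_0 \in K$ be any preperiodic point of $f$. Since $c \in K$, the entire forward orbit $\{z_0,\,f(z_0),\,f^2(z_0),\ldots\}$ lies in $K$, so after finitely many iterations it falls into a cycle consisting of periodic points of $f$ in $K$. By hypothesis this cycle has length at most $N$, so the eventual period of $z_0$ is at most $N$.

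Because every preperiodic point of $f$ in $K$ satisfies this uniform bound on eventual period, Theorem~\ref{T:number fields} applied with the chosen $N$ bounds the total number of such preperiodic points by $B(d,D,N) = B'(d,D)$. Since $B'$ depends only on $d$ and $D$, this is the uniform bound required by the corollary.
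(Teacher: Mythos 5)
Your proposal is correct and matches the paper's proof: both observe that the hypothesis forces every $K$-rational preperiodic point to have eventual period at most $N$ (since its cycle consists of $K$-rational periodic points), and then take $B' = B(d,D,N)$ from Theorem~\ref{T:number fields}. Your extra verification that the forward orbit stays in $K$ is a worthwhile detail the paper leaves implicit.
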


\begin{proof}
By assumption, if $[K:\Q] \le D$ and $c \in K$,
then the preperiodic points of $z^d+c$ in $K$
having eventual period at most $N$ 
are \emph{all} the preperiodic points in $K$.
Therefore the bound $B(d,D,N)$ of Theorem~\ref{T:number fields}
is actually a bound on the \emph{total} number 
of preperiodic points in $K$.
Take $B' = B(d,D,N) = B(d,D,N(d,D))$.
\end{proof}

\section{Classification of dynatomic curves}
\label{S:classification of dynatomic curves}

For $m,n \ge 1$, let $Y_1(m,n)$ be the curve over $k$
whose general points parametrize polynomials $z^d+c$ 
equipped with a preperiodic point that after exactly $m$ steps
enters an $n$-cycle. 
This curve is the zero locus in $\Aff^2_k$ of the polynomial
\[
	\Phi_{m,n}(z,c) \colonequals 
	\frac{\Phi_n(f^m(z),c)}{\Phi_n(f^{m-1}(z),c)}.
\]
For a general point $(z,c) \in Y_1(1,n)$, 
the elements $z$ and $f^n(z)$ are distinct preimages of $f(z)$,
so $z = \zeta f^n(z)$ for some $\zeta \in \mu_d-\{1\}$.
Suppose that $\mu_d \subseteq k$.
For each $\zeta \in \mu_d-\{1\}$, 
let $Y_1(1,n)^{\zeta}$ be the subscheme of $Y_1(1,n)$
defined by the condition $z = \zeta f^n(z)$, so
\[
	Y_1(1,n) = \Union_{\zeta \in \mu_d-\{1\}} Y_1(1,n)^{\zeta}.
\]
Both $(z,c) \mapsto (f(z),c)$ and $(z,c) \mapsto (\zeta^{-1} z,c)$
define isomorphisms $Y_1(1,n)^{\zeta} \to Y_1(n)$.
In particular, $Y_1(1,n)^{\zeta}$ equals 
the curve $\Phi_n(\zeta^{-1}z,c)=0$ in $\Aff^2_k$.
For $m \ge 2$, let $Y_1(m,n)^{\zeta}$ be the inverse image of
$Y_1(1,n)^{\zeta}$ under 
\begin{align*}
	Y_1(m,n) &\To Y_1(1,n) \\
	(z,c) &\longmapsto (f^{m-1}(z),c).
\end{align*}
Then for any $m,n \ge 1$, 
\begin{equation} \label{E:decomposition of Y1mn}
	Y_1(m,n) = \Union_{\zeta \in \mu_d-\{1\}} Y_1(m,n)^{\zeta},
\end{equation}
and $Y_1(m,n)^{\zeta}$ equals the curve $\Phi_n(\zeta^{-1}f^{m-1}(z),c)=0$
in $\Aff^2_k$.
The decomposition~\eqref{E:decomposition of Y1mn} corresponds to 
a factorization
\begin{equation}\label{E:factorization}
	\Phi_{m,n}(z,c) = \prod_{\zeta \in \mu_d - \{1\}} \Phi_n(\zeta^{-1}f^{m-1}(z),c).
\end{equation}

The following is a collection of results from \cites{Bousch1992, Lau-Schleicher1994, Morton1996, Gao2016}.

\begin{theorem}
\label{T:classification of dynatomic curves}
Let $k$ be a field of characteristic~$0$ such that $\mu_d \subset k$.
Then the curves $Y_1(n)$ for $n \ge 1$
and the curves $Y_1(m,n)^\zeta$ for $m,n \ge 1$ and $\zeta \in \mu_d-\{1\}$
are irreducible,
so they are all the dynatomic curves over $k$.
\end{theorem}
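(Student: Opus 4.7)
The statement is a compilation of results from the literature, so my plan is to invoke each in turn and then glue them together using the factorizations developed earlier in this section.

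First, for the irreducibility of $Y_1(n)$, I invoke the geometric irreducibility of $\Phi_n(z,c) \in \Q(\mu_d)[z,c]$, proved by Bousch (analytically) and by Morton (algebraically) for $d = 2$ and extended to general $d$ by Lau--Schleicher and Gao. The underlying idea is to study the cover $Y_1(n) \to \Aff^1_c$ near $c = \infty$, where $f_c$ degenerates to $z \mapsto z^d$ and period-$n$ cycles are indexed by the action of multiplication by $d$ on $\Z/(d^n-1)\Z$; one then verifies that the resulting monodromy is transitive. Geometric irreducibility over $\Q$ implies irreducibility over any characteristic zero $k$ containing $\mu_d$.

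Next, for the irreducibility of $Y_1(m,n)^\zeta$: the case $m = 1$ reduces to the previous step via the isomorphism $(z,c) \mapsto (\zeta^{-1}z, c)$ identifying $Y_1(1,n)^\zeta$ with $Y_1(n)$. The case $m \ge 2$ is the main content of Gao's theorem: one analyzes the cover $(z,c) \mapsto (f^{m-1}(z),c)$ over $Y_1(1,n)^\zeta$ and, using irreducibility of $Y_1(n)$, shows the corresponding monodromy acts transitively, so each factor $\Phi_n(\zeta^{-1} f^{m-1}(z),c)$ appearing in~\eqref{E:factorization} is irreducible.

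To confirm that these curves exhaust the dynatomic curves, one combines the identity $f^N(w) - w = \prod_{e \mid N} \Phi_e(w,c)$, specialized to $w = f^m(z)$ and $N = n-m$, with the telescoping
\[
	\Phi_e(f^m(z),c) = \prod_{i=0}^{m} \Phi_{i,e}(z,c)
\]
(where $\Phi_{0,e} \colonequals \Phi_e$) and the further decomposition in~\eqref{E:factorization}. The result is an explicit factorization of $f^n(z) - f^m(z)$ into the irreducible polynomials defining the curves $Y_1(e)$ and $Y_1(i,e)^\zeta$, so every irreducible factor of some $f^n(z) - f^m(z)$, and hence every dynatomic curve, appears on the stated list.

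The main obstacle is the first step: irreducibility of $\Phi_n$ is genuinely deep and requires either Bousch's analytic argument or Morton's mod-$p$ cycle combinatorics; without it, the remaining assertions are essentially formal. Gao's preperiodic extension adds technical difficulty but builds on the periodic case together with the explicit factorization~\eqref{E:factorization}, so once the periodic irreducibility is granted it is the only remaining nontrivial ingredient.
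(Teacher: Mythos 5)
Your proposal is correct and matches the paper's treatment: the paper offers no proof of this theorem, presenting it simply as a compilation of the cited results of Bousch, Lau--Schleicher, Morton, and Gao, which are exactly the inputs you invoke. The only content you add beyond the citations is the exhaustion argument via $f^n(z)-f^m(z)=\prod_{e\mid n-m}\Phi_e(f^m(z),c)$ and the telescoping of $\Phi_e(f^m(z),c)$ into the factors $\Phi_{i,e}$, which is the routine bookkeeping the paper leaves implicit, and your version of it checks out.
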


Computer experiments (at least for $d=2$) suggest that
Theorem~\ref{T:classification of dynatomic curves} holds 
for any field $k$ such that $\Char k \nmid d$ and $\mu_d \subset k$,
but in positive characteristic this remains unproved. 
See \cite{Doyle-et-al-preprint}, especially Theorems B and~D, 
for some progress in this direction.

\begin{remark}
\label{R:not geometrically irreducible}
If in Theorem~\ref{T:classification of dynatomic curves} 
we drop the hypothesis that $\mu_d \subset k$,
then the irreducible components of $Y_1(m,n)$ over $k$ 
are in bijection with the $\Gal(k(\mu_d)/k)$-orbits in $\mu_d-\{1\}$.
An irreducible component corresponding to an orbit of size greater than $1$
is not geometrically irreducible.
\end{remark}

\section{Gonality in characteristic~0}
\label{S:gonality in characteristic 0}

Given a geometrically irreducible curve $X$ over $k$,
let $g(X)$ denote the genus of its smooth projective model.
Let $D_0(n)$ be the degree of the morphism $X_0(n) \stackrel{c}\To \PP^1$.
Similarly, let $D_1(n) = \deg\left(X_1(n) \stackrel{c}\To \PP^1\right)$.

\begin{proposition}
\label{P:properties of X_0}
Fix $d \ge 2$, and fix a field $k$ of characteristic~$0$.
\begin{enumerate}[\upshape (a)]
\item\label{I:D_1(n)} 
We have $D_1(n) = (1+o(1))d^n$ as $n \to \infty$.
\item\label{I:D_0(n)} 
We have $D_0(n) = (1+o(1))d^n/n$ as $n \to \infty$.
\item\label{I:genus of X_0}
We have $g(X_0(n)) > \left(\frac{1}{2} - \frac{1}{2d} - o(1)\right) d^n$ 
as $n \to \infty$.
\item\label{I:Galois group of X_0}
The Galois group of (the Galois closure of)
the covering $X_0(n) \stackrel{c}\To \PP^1$
is the full symmetric group $S_{D_0(n)}$.
\end{enumerate}
\end{proposition}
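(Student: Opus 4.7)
\emph{Plan of proof.}

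\textbf{Parts (a) and (b).} Since $f^n(z) - z = \prod_{e \mid n} \Phi_e(z,c)$ and $\deg_z(f^e(z)-z) = d^e$, Möbius inversion yields
\[
D_1(n) = \deg_z \Phi_n(z,c) = \sum_{e \mid n}\mu(n/e)\,d^e = d^n + O(n\,d^{n/2}),
\]
giving $D_1(n) = (1+o(1))d^n$ and proving (a). For (b), the automorphism $\sigma$ acts freely on a generic fiber of $X_1(n) \to \PP^1_c$: a generic point has exact period $n$ under $f_c$ and hence $\sigma$-orbit of size exactly $n$. Therefore $D_0(n) = D_1(n)/n = (1+o(1))d^n/n$.

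\textbf{Part (c).} The plan is to apply Riemann--Hurwitz to $X_0(n) \stackrel{c}\to \PP^1_c$, which has degree $D_0(n)\sim d^n/n$, and show that the ramification divisor $R$ has degree $\deg R \ge \tfrac{d-1}{d}\,n\,D_0(n) + o(d^n)$. The analysis splits into two pieces. Over $c = \infty$: by the Julia-set scaling, a period-$n$ point satisfies $|z| \sim |c|^{1/d}$, so the substitution $z = c^{1/d}w$ together with Puiseux expansion of $\Phi_n(c^{1/d}w, c) = 0$ in $c^{-1/d}$ shows that each branch of $X_1(n)$ above $\infty$ is ramified of index $d$ over the $c$-line; pushing down through the free $\sigma$-action yields a contribution of $\tfrac{d-1}{d}\,D_0(n)$ to $\deg R$. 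Over finite $c_0 \in \Aff^1$ at which two $n$-cycles of $f_{c_0}$ collide in a parabolic bifurcation, the local monodromy is a transposition and one controls the total contribution by the $c$-degree of $\operatorname{disc}_z \Phi_n(z,c)$. Combining both pieces and plugging into Riemann--Hurwitz,
\[
2g(X_0(n)) - 2 \;\ge\; -2D_0(n) + \tfrac{d-1}{d}\,n\,D_0(n) + o(d^n),
\]
and using (a), (b) gives $g(X_0(n)) > (\tfrac{1}{2} - \tfrac{1}{2d} - o(1))d^n$.

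\textbf{Part (d).} The monodromy group $G \subseteq S_{D_0(n)}$ of the cover $X_0(n) \stackrel{c}\to \PP^1_c$ is transitive by the geometric irreducibility of $X_0(n)$ in characteristic $0$. To upgrade to $G = S_{D_0(n)}$, the plan is to exhibit a transposition in $G$ by locating a parameter $c_0 \in \C$ at which exactly two $n$-cycles of $f_{c_0}$ collide through a simple parabolic bifurcation while all remaining $n$-cycles stay distinct; the local monodromy around $c_0$ is then a transposition. A transitive subgroup of $S_N$ containing a transposition equals $S_N$ once primitivity is verified, and primitivity follows by ruling out $G$-invariant block systems on the set of $n$-cycles, either by exhibiting a second specialization whose monodromy cycle type is incompatible with any block structure, or by directly establishing $2$-transitivity.

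The main obstacle is part (c), specifically obtaining a lower bound of the correct shape for $\deg R$. Ramification at $c = \infty$ alone contributes only $\tfrac{d-1}{d}D_0(n) \sim d^n/n$, which is smaller than what is needed by a factor of $n$; the bulk of the contribution, of order $\tfrac{d-1}{d}\,n\,D_0(n) \sim d^n$, must come from finite branch points, and making this rigorous requires a delicate lower bound on $\deg_c \operatorname{disc}_z \Phi_n(z,c)$ that avoids overcounting contributions from components where the discriminant vanishes identically. Part (d) presents the secondary difficulty of explicitly exhibiting a parameter $c_0$ at which precisely one pair of $n$-cycles collides.
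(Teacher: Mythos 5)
Parts (a) and (b) are correct and are essentially the paper's argument: the M\"obius identity $D_1(n)=\sum_{e\mid n}\mu(n/e)d^e$ gives (a), and the degree-$n$ quotient $X_1(n)\to X_0(n)$ gives (b). (The paper simply points to \cite{Morton1996}*{Theorem~13(d)} for the asymptotics, but your derivation is the intended one.)

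For (c) and (d), however, what you have written is a plan whose hard steps are exactly the content of the statements, and those steps are not carried out. In (c), you correctly identify that the ramification of $X_0(n)\stackrel{c}\to\PP^1$ over $c=\infty$ contributes only $O(d^n/n)$ to $\deg R$, so the claimed genus bound lives or dies on a lower bound of order $d^n$ for the ramification over finite parameters. That bound amounts to controlling the number of parabolic bifurcation parameters, i.e.\ the degree and root multiplicities of $\operatorname{disc}_z\Phi_n(z,c)$ (equivalently, the multiplier-$1$ locus), and you give no argument for it beyond naming it as ``the main obstacle.'' This is precisely what Morton proves in \cite{Morton1996}*{Theorem~13(d)}, which the paper cites; without it your Riemann--Hurwitz inequality has no content. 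Similarly, in (d) the two ingredients you would need --- the existence of a parameter at which exactly one pair of $n$-cycles collides in a simple parabolic bifurcation (giving a transposition), and primitivity (or $2$-transitivity) of the monodromy group --- are the substance of the theorems of Bousch and Lau--Schleicher (see also \cite{Morton1998}*{Theorem~B} and \cite{Schleicher2017}) that the paper invokes, and neither is established in your proposal; primitivity in particular does not follow from transitivity plus a single unidentified specialization. So the strategy is the right one, but (c) and (d) remain unproved as written; you should either supply the discriminant/bifurcation analysis or cite the results of Morton, Bousch, and Lau--Schleicher as the paper does.
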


\begin{proof}
We may assume $k=\C$.
\begin{enumerate}[\upshape (a)]
\item 
See the proof of~\cite{Morton1996}*{Theorem~13(d)}.
The estimate is deduced from $D_1(n) = \deg_z \Phi_n = \sum_{e|n} \mu(n/e) d^e$,
which follows from~\eqref{E:definition of Phi}.
\item 
The first morphism in the tower $X_1(n) \to X_0(n) \stackrel{c}\To \PP^1$ 
has degree $n$, so $D_0(n) = D_1(n)/n$.
Substitute \eqref{I:D_1(n)} into this.
\item 
More precisely, 
$g(X_0(n)) > \left(\frac{1}{2} - \frac{1}{2d} - \frac{1}{n}\right)
 d^n + O(nd^{n/2})$ as $n \to \infty$, by \cite{Morton1996}*{Theorem~13(d)}.
\item 
This is a consequence of work of Bousch~\cite{Bousch1992} for $d=2$,
and Lau and Schleicher~\cite{Lau-Schleicher1994} for $d>2$.
See also \cite{Morton1998}*{Theorem~B} and \cite{Schleicher2017}.
\qedhere
\end{enumerate}
\end{proof}

A well-known strategy for obtaining lower bounds on gonality
(cf.\ \cite{Nguyen-Saito1996preprint} and \cite{Poonen2007-gonality}*{\S2})
involves the following,
which will tell us roughly that 
if a high genus curve has a relatively low degree map to a low genus curve,
it cannot have a second such map that is independent of the first.

\begin{proposition}[Castelnuovo--Severi inequality]
\label{P:Castelnuovo-Severi}
Let $F$, $F_1$, $F_2$ be function fields of curves over $k$,
of genera $g$, $g_1$, $g_2$, respectively.
Suppose that $F_i \subseteq F$ for $i=1,2$
and the compositum of $F_1$ and $F_2$ in $F$ equals $F$.
Let $d_i=[F:F_i]$ for $i=1,2$.
Then
\[
	g \le d_1 g_1 + d_2 g_2 + (d_1-1)(d_2-1).
\]
\end{proposition}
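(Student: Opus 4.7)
The plan is to translate the function-field statement into intersection theory on the surface $S := X_1 \times X_2$ and apply the adjunction formula. Let $X$, $X_1$, $X_2$ be the smooth projective curves with function fields $F$, $F_1$, $F_2$; the inclusions $F_i \hookrightarrow F$ correspond to finite morphisms $\phi_i \colon X \to X_i$ of degree $d_i$. The compositum hypothesis $F = F_1 F_2$ means precisely that the product map $\phi := (\phi_1,\phi_2) \colon X \to S$ is birational onto its image, an irreducible curve $C \subset S$. Since $X$ is the normalization of $C$, its geometric genus $g$ is at most the arithmetic genus $p_a(C)$, so it suffices to bound $p_a(C)$.

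By adjunction on $S$, $2 p_a(C) - 2 = C^2 + K_S \cdot C$. For the canonical term, I would use $K_S = \pi_1^* K_{X_1} + \pi_2^* K_{X_2}$, where $\pi_i \colon S \to X_i$ are the projections; since a fiber of $\pi_i$ meets $C$ in $\deg(\pi_i|_C) = d_i$ points (with multiplicity), this gives $K_S \cdot C = d_1(2g_1 - 2) + d_2(2g_2 - 2)$.

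The crucial step is the bound $C^2 \le 2 d_1 d_2$, for which I would invoke the Hodge index theorem. Let $f_i$ denote the divisor class of a fiber of $\pi_i$, so $f_i^2 = 0$, $f_1 \cdot f_2 = 1$, and $C \cdot f_i = d_i$. Then the class $D := C - d_2 f_1 - d_1 f_2$ satisfies $D \cdot f_1 = D \cdot f_2 = 0$, hence $D \cdot (f_1 + f_2) = 0$; since $(f_1 + f_2)^2 = 2 > 0$, Hodge index forces $D^2 \le 0$, and expanding $D^2 = C^2 - 2 d_1 d_2$ yields the desired inequality. Substituting into the adjunction formula and using $g \le p_a(C)$ then gives $g \le d_1 g_1 + d_2 g_2 + (d_1-1)(d_2-1)$.

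The main obstacle is really just the conceptual setup rather than a hard computation: one must recognize that the two intermediate function fields are packaged by projecting $C = \phi(X)$ onto the factors of $X_1 \times X_2$, and that birationality of $\phi$ is exactly equivalent to the compositum hypothesis. Once the problem is transported to the surface, adjunction combined with Hodge index is a standard argument that works in arbitrary characteristic, and the statement for general $k$ follows by base-changing to $\bar k$ (genus being preserved for smooth projective curves).
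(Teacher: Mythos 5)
Your argument is correct, but it is a genuinely different route from the one the paper relies on: the paper simply cites Stichtenoth III.10.3, whose proof is purely function-field-theoretic, using the Riemann--Roch theorem to estimate dimensions of spaces $\mathscr{L}(rD_1+sD_2)$ for suitable pull-back divisors; that argument works over an arbitrary (not necessarily algebraically closed) constant field and needs no geometric integrality hypotheses. Your proof is the classical surface-theoretic one: identify $X$ with the normalization of its image $C\subset X_1\times X_2$ (birationality of $(\phi_1,\phi_2)$ being exactly the compositum hypothesis), bound $g\le p_a(C)$, compute $K_S\cdot C$ by the projection formula, and bound $C^2\le 2d_1d_2$ by applying the Hodge index theorem to $D=C-d_2f_1-d_1f_2$; the arithmetic checks out ($D\cdot f_i=0$, $(f_1+f_2)^2=2>0$, $D^2=C^2-2d_1d_2$, and the final substitution gives $(d_1-1)(d_2-1)$ exactly). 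What your approach buys is geometric transparency and a clean characteristic-free argument; what it costs is the need to pass to $\kbar$ for the Hodge index theorem, which tacitly assumes the curves are geometrically integral so that genera and the compositum condition are preserved under base change --- harmless for the applications in this paper (characteristic $0$, geometrically irreducible curves), but a hypothesis not present in the proposition as stated, and one that Stichtenoth's algebraic proof avoids.
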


\begin{proof}
See \cite{Stichtenoth1993}*{III.10.3}.
\end{proof}

\begin{proof}[Proof of Theorem~\ref{T:X_0}]
Let $X_0(n) \stackrel{h}{\To} \PP^1$ be a dominant rational map 
of minimal degree.

\emph{Case I: $h$ factors through $X_0(n) \stackrel{c}\To \PP^1$.}
Then
\[
	\deg h \ge D_0(n) = (1+o(1)) \frac{d^n}{n}
\]
by Proposition~\ref{P:properties of X_0}\eqref{I:D_0(n)},
so $\deg h$ is much larger than $n$ when $n$ is large.

\emph{Case II: $h$ does not factor through $X_0(n) \stackrel{c}\To \PP^1$.}
Then the compositum of $k(c)$ and $k(h)$ in the function field 
$k(X_0(n))$ is strictly larger than $k(c)$.
Because of the Galois group 
(Proposition~\ref{P:properties of X_0}\eqref{I:Galois group of X_0}),
the only nontrivial extension of $k(c)$ in $k(X_0(n))$
is the whole field $k(X_0(n))$.
Thus $k(c)$ and $k(h)$ generate $k(X_0(n))$.
By Proposition~\ref{P:Castelnuovo-Severi},
\[
	g(X_0(n)) \le (D_0(n)-1)(\deg h - 1).
\]
Thus
\[
	\deg h \ge 1 + \frac{g(X_0(n))}{D_0(n)-1} = \left(\frac{1}{2} - \frac{1}{2d} - o(1) \right) n
\]
as $n \to \infty$, 
by Proposition~\ref{P:properties of X_0}(\ref{I:D_0(n)},\ref{I:genus of X_0}).
\end{proof}

The following lemma, 
which says in particular that $Y_1(m,n)^{\zeta} \stackrel{z}\to \Aff^1$ 
is \'etale above $0$, 
will yield genus inequalities 
to combine with the Castelnuovo--Severi inequality
in the proof of Theorem~\ref{T:gonality}.

\begin{lemma}\label{L:zero preperiodic}
Let $k$ be a field of characteristic~$0$ such that $\mu_d \subseteq k$.
Let $m$ and $n$ be positive integers, and let $\zeta \in \mu_d - \{1\}$. 
Then the polynomial $\Phi_n(\zeta^{-1} f^{m-1}(0),c) \in k[c]$ 
has only simple roots,
and their number is $d^{m-2} D_1(n)$ if $m \ge 2$.
\end{lemma}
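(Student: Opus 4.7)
My plan has two main parts: the degree of $F(c) \colonequals \Phi_n(\zeta^{-1} f^{m-1}(0), c)$, and separability.

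For the degree, I would work at the valuation $v_\infty$ on $\overline{k(c)}$ at the place $c = \infty$, normalized so $v_\infty(c) = -1$. Factor $\Phi_n(z, c) = \prod_{i=1}^{D_1(n)}(z - \alpha_i)$ in $\overline{k(c)}[z]$; the $\alpha_i$ are the periodic points of $f_c(z) = z^d + c$ of exact period $n$. A short valuative calculation shows each $\alpha_i$ satisfies $v_\infty(\alpha_i) = -1/d$ (any other value produces $v_\infty(f_c^n(\alpha_i)) \neq v_\infty(\alpha_i)$, forbidding periodicity). A routine induction gives $\deg_c f^{m-1}(0) = d^{m-2}$ for $m \ge 2$, so $v_\infty(\zeta^{-1} f^{m-1}(0)) = -d^{m-2} < -1/d$. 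Each factor then has $v_\infty(\zeta^{-1} f^{m-1}(0) - \alpha_i) = -d^{m-2}$, so $v_\infty(F) = -d^{m-2} D_1(n)$, giving $\deg_c F = d^{m-2} D_1(n)$.

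For separability, I would first reduce to a simpler statement via M\"obius inversion. Writing $F(c) = \prod_{e \mid n} H_e(c)^{\mu(n/e)}$ with $H_e(c) \colonequals f^{e+m-1}(0) - \zeta^{-1} f^{m-1}(0) \in k[c]$, a standard argument on orders of vanishing shows that separability of all $H_e$ (for $e \mid n$) implies separability of $F$: at any $c_0 \in \kbar$, assuming each $H_e$ has only simple roots there, one computes $\ord_{c_0} F = \sum_{e \mid n,\ e_0 \mid e} \mu(n/e)$, which equals $1$ when $\zeta^{-1} f_{c_0}^{m-1}(0)$ has exact period $n = e_0$ and $0$ otherwise. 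So the question reduces to showing each $H_e$ has only simple roots.

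This last step is the main obstacle. At a root $c_0$ of $H_e$, the point $w_0 \colonequals \zeta^{-1} f_{c_0}^{m-1}(0)$ is $e$-periodic for $f_{c_0}$. Unrolling the recursion $\partial_c f^{j+1}(0) = f'(f^{j}(0))\,\partial_c f^{j}(0) + 1$ and using the identity $f'(\zeta w_0) = \zeta^{-1} f'(w_0)$ (from $\zeta^{d} = 1$) yields the closed-form expression $\zeta\, H_e'(c_0) = (\lambda - 1)\,\partial_c f^{m-1}(0)|_{c_0} + \zeta \Sigma$, where $\lambda$ is the multiplier of the $e$-cycle through $w_0$ and $\Sigma$ is a partial-multiplier sum along the cycle. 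Embedding $\kbar$ into $\C$ and invoking Fatou (the critical orbit of $f_{c_0}$ lands on the cycle in finite time, which rules out attracting or neutral cycles), the cycle is repelling, so $|\lambda| > 1$ and in particular $\lambda \neq 1$. The remaining inequality $(\lambda - 1)\,\partial_c f^{m-1}(0)|_{c_0} + \zeta\Sigma \neq 0$ is the heart of the proof; I would attack it either by a Milnor-style transversality argument (the critical orbit and the periodic cycle cannot infinitesimally track each other in $c$ at a repelling Misiurewicz parameter) or by a Gleason-style reduction modulo a prime $p \nmid d$ extending the classical $d = 2$, $m = 1$ argument. This conversion from the dynamical repelling-cycle input to the algebraic nonvanishing of a concrete polynomial is where I expect the central difficulty to concentrate.
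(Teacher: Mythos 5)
Your degree computation is correct and is essentially the paper's (the paper argues directly with $\deg_c f^{m-1}(0)=d^{m-2}$ and the M\"obius product; your valuative version at $c=\infty$ is an equivalent repackaging). Your M\"obius reduction of separability to the simplicity of the roots of each $H_e(c)=f^{e+m-1}(0)-\zeta^{-1}f^{m-1}(0)$ is also a legitimate reformulation.

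The genuine gap is exactly where you say you expect it: the simplicity of the roots of the $H_e$ (equivalently, the transversality statement at the relevant parameters) is never proved, only described as something you ``would attack'' by a Milnor-style or Gleason-style argument. This is not a routine verification; it is the entire content of the lemma. The paper does not prove it from scratch either --- it imports it from the literature, splitting into two cases: when $n \nmid m-1$ the parameters in question are Misiurewicz points and the simplicity of the roots of $\Phi_{m,n}(0,c)$ is \cite{HutzTowsley2015}*{Theorem 1.1}, which transfers to $\Phi_n(\zeta^{-1}f^{m-1}(0),c)$ via the factorization \eqref{E:factorization}; when $n \mid m-1$ one must additionally handle parameters where $0$ itself is periodic, and there the paper invokes \cite{Epstein2012}*{Proposition A.1} together with the quotient identity \eqref{E:factorization 2}. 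Your proposal would be complete if you cited these results (or reproduced their proofs), but as written the central nonvanishing $(\lambda-1)\,\partial_c f^{m-1}(0)+\zeta\Sigma\neq 0$ is left open.

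A secondary error: your Fatou argument (``the critical orbit lands on the cycle, so the cycle is repelling'') is false precisely in the case $n \mid m-1$, where $0$ can lie \emph{in} the cycle, making it superattracting with $\lambda=0$. The conclusion $\lambda\neq 1$ survives, but this is the case that forces the separate Gleason/Epstein treatment, and your sketch does not distinguish it.
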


\begin{proof}
First suppose that $n$ does not divide $m - 1$. In this case, the roots of $\Phi_{m,n}(0,c)$ are distinct by \cite{HutzTowsley2015}*{Theorem 1.1}, and therefore the roots of $\Phi_n(\zeta^{-1}f^{m-1}(0),c)$ are distinct by \eqref{E:factorization}. 

Now suppose that $n$ divides $m - 1$. 
By \cite{Epstein2012}*{Proposition A.1}, the roots of $\Phi_n(0,c)$ are simple.
If $c$ is such a root, then the polynomial $f=f_c$ satisfies $f^n(0)=0$,
so $f^{m-1}(0)=0$ and $\Phi_n(\zeta^{-1}f^{m-1}(0),c) = 0$. 
Thus $\Phi_n(0,c)$ divides $\Phi_n(\zeta^{-1}f^{m-1}(0),c)$.
The factorization~\eqref{E:factorization} yields
\begin{equation}
\label{E:factorization 2}
	\frac{\Phi_{m,n}(0,c)}{\Phi_n(0,c)^{d-1}} = \prod_{\zeta \in \mu_d - \{1\}} \frac{\Phi_n(\zeta^{-1}f^{m-1}(0),c)}{\Phi_n(0,c)}.
\end{equation}
By \cite{HutzTowsley2015}*{Theorem 1.1}, 
$\Phi_{m,n}(0,c)/\Phi_n(0,c)^{d-1}$ has only simple roots, 
none of which are also roots of $\Phi_n(0,c)$. 
Combining this with \eqref{E:factorization 2} shows that 
$\Phi_n(\zeta^{-1}f^{m-1}(0),c)$ has only simple roots.

It remains to prove $\deg \Phi_n(\zeta^{-1}f^{m-1}(0),c) = d^{m-2} D_1(n)$.
In fact, this is \cite{Gao2016}*{Lemma~4.8}.
In our notation, the argument is as follows.
By induction on $m$, the degree of the polynomial $f^{m-1}(0) \in k[c]$ 
is $d^{m-2}$ if $m \ge 2$.
Hence, by induction on $e$, 
we have $\deg f^e(\zeta^{-1}f^{m-1}(0)) = d^e d^{m-2}$ for each $e \ge 0$.
Thus the $c$-degree of $f^e(\zeta^{-1}f^{m-1}(0))-\zeta^{-1}f^{m-1}(0)$
is $d^{m-2}$ times the $z$-degree of $f^e(z)-z$ for each $e \ge 1$.
Substituting $\zeta^{-1}f^{m-1}(0)$ for $z$ in \eqref{E:definition of Phi}
shows that 
\[
	\deg_c \Phi_n(\zeta^{-1}f^{m-1}(0),c) = d^{m-2} \deg_z \Phi_n(z,c) = d^{m-2} D_1(n).\qedhere
\]
\end{proof}

\begin{proof}[Proof of Theorem~\ref{T:gonality} in characteristic~$0$]
\hfill
\begin{enumerate}[\upshape (a)]
\item By Theorem~\ref{T:classification of dynatomic curves},
the dynatomic curves over $k$ are the curves $Y_1(n)$ and $Y_1(m,n)^\zeta$,
and they are geometrically irreducible.
\item The curves $Y_1(n)$ and $Y_1(m,n)^\zeta$ dominate $X_0(n)$,
so their gonalities are at least the gonality of $X_0(n)$,
by \cite{Poonen2007-gonality}*{Proposition~A.1(vii)}.
In light of Theorem~\ref{T:X_0},
it remains to prove that in each tower
\[
	\cdots \To Y_1(m,n)^\zeta \To Y_1(m-1,n)^\zeta \To \cdots \To Y_1(1,n)^\zeta
\]
for fixed $n$ and $\zeta$, the gonality tends to $\infty$ as $m \to \infty$.

Let $g_m = g(Y_1(m,n)^\zeta)$ and $\gamma_m = \gamma(Y_1(m,n)^\zeta)$.
Let $Y_1(m,n)^\zeta \stackrel{h}\To \PP^1$
be a dominant rational map of the minimal degree $\gamma_m$.

\emph{Case I: $h$ factors through a curve $Z$ with
$k(Y_1(m-1,n)^\zeta) \subseteq k(Z) \subsetneq k(Y_1(m,n)^\zeta)$.}
These inclusions imply $\gamma_{m-1} \le \gamma(Z)$
(by \cite{Poonen2007-gonality}*{Proposition~A.1(vii)})
and $2 \gamma(Z) \le \deg h = \gamma_m$, 
respectively.
Combining these yields $\gamma_m \ge 2 \gamma_{m-1}$.
Thus $\gamma_m$ grows, by induction on $m$.

\emph{Case II: $h$ does not factor through any such curve $Z$.}
Denote by $\pi_m$ the degree $d$ map
	\begin{align*}
		\pi_m \colon Y_1(m,n)^{\zeta} &\To Y_1(m-1,n)^{\zeta}\\
			(z,c) &\longmapsto (f(z),c),
	\end{align*}
and let $\widetilde{\pi}_m \colon X_1(m,n)^{\zeta} \to X_1(m-1,n)^{\zeta}$
be its extension to the smooth projective models.
Let $R_m$ be the ramification divisor of $\widetilde{\pi}_m$.

Applying Proposition~\ref{P:Castelnuovo-Severi} 
to $\pi_m$ and $h$ yields
\[
	g_m \le d g_{m-1} + (d-1)(\gamma_m-1),
\]
so it suffices to show that $g_m - dg_{m-1} \to \infty$ as $m \to \infty$. By Riemann--Hurwitz, this is equivalent to showing that $\deg R_m \to \infty$ as $m \to \infty$. 
In the fiber product diagram
\[
\xymatrix{
Y_1(m,n)^{\zeta} \ar[r]^-{\pi_m} \ar[d]_-z & Y_1(m-1,n)^{\zeta} \ar[d]^-z \\
\Aff^1 \ar[r]^-f & \Aff^1, \\
}
\]
both vertical morphisms are \'etale above $0$ 
by Lemma~\ref{L:zero preperiodic},
while $f$ has ramification index $d$ at $0$,
so $\widetilde{\pi}^m$ has ramification index $d$
at each point of $Y_1(m,n)^{\zeta}$ where $z=0$.
For $m \ge 2$, the number of such points is $d^{m-2} D_1(n)$
by Lemma~\ref{L:zero preperiodic}.
Thus $\deg R_m \ge (d-1) d^{m-2} D_1(n)$,
which tends to $\infty$ as $m \to \infty$.
\qedhere
\end{enumerate}
\end{proof}

\section{Gonality in characteristic \texorpdfstring{$p$}{p}}
\label{S:gonality in characteristic p}

\subsection{Reduction to the case of a finite field}
\label{S:reduction}

Let $\F_q=\F_p(\mu_d)$.

\begin{lemma}
\label{L:reduction}
Theorem~\ref{T:gonality} for $\F_q$ 
implies Theorem~\ref{T:gonality} for any
characteristic~$p$ field $k$ containing $\mu_d$.
\end{lemma}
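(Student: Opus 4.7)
The plan is to transfer both conclusions of Theorem~\ref{T:gonality} from $\F_q$ to any characteristic-$p$ field $k$ containing $\mu_d$, using that $\F_q = \F_p(\mu_d) \subseteq k$ and that $f^n(z)-f^m(z)$ has coefficients in $\F_p[z,c]$. By part~\eqref{I:geometrically irreducible} applied over $\F_q$, each $\F_q$-irreducible factor of $f^n(z)-f^m(z)$ is already geometrically irreducible, so it remains irreducible in $\overline{\F_q}[z,c]$ and a fortiori in $k[z,c]$. Hence the irreducible factorizations of $f^n(z)-f^m(z)$ over $\F_q$ and over $k$ coincide, so every dynatomic curve $X$ over $k$ has the form $X_0 \times_{\F_q} k$ for a unique dynatomic curve $X_0$ over $\F_q$; since $X_0$ is geometrically irreducible, so is $X$. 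This proves part~\eqref{I:geometrically irreducible} over $k$.

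For part~\eqref{I:gonality of dynatomic curves}, the same correspondence reduces us to the implication: if $\gamma(X_0) \to \infty$ as $X_0$ runs over dynatomic curves over $\F_q$, then $\gamma(X_0 \times_{\F_q} k) \to \infty$. The naive inequality $\gamma(X_0 \times_{\F_q} k) \leq \gamma(X_0)$ runs the wrong way, so I would invoke the result on how gonality changes under base-field extension, promised in the introduction, to bound $\gamma(X_0)$ above in terms of the geometric gonality $\gamma((X_0)_{\overline{\F_q}})$. A bound of the form $\gamma(X_0) \leq C_q \cdot \gamma((X_0)_{\overline{\F_q}})$ with $C_q$ depending only on $q$ is what is needed: combining it with the invariance of geometric gonality under extensions of algebraically closed base fields (so that $\gamma((X_0)_{\overline{\F_q}}) = \gamma((X_0 \times_{\F_q} k)_{\overline{k}})$) and the trivial inequality $\gamma((X_0 \times_{\F_q} k)_{\overline{k}}) \leq \gamma(X_0 \times_{\F_q} k)$ yields $\gamma(X_0) \leq C_q \cdot \gamma(X_0 \times_{\F_q} k)$, from which the conclusion follows.

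The main obstacle is this gonality-comparison step. One needs a uniform bound on the $\F_q$-gonality of a curve in terms of its $\overline{\F_q}$-gonality. A natural strategy is to take a minimal-degree rational map $(X_0)_{\overline{\F_q}} \to \PP^1$, note that it is defined over some $\F_{q^n}$, and apply a Galois-averaging construction (for instance, multiplying the Galois conjugates together as maps to $\G_m$) to descend to $\F_q$; the delicate point is to bound the resulting degree independently of $n$, which is the technical content of the descent result being invoked.
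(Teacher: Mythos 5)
Your architecture is the same as the paper's: part (a) is handled identically (geometric irreducibility over $\F_q$ forces the $\F_q$- and $k$-factorizations of $f^n(z)-f^m(z)$ to coincide, so dynatomic curves over $k$ are base extensions of those over $\F_q$), and for part (b) you correctly isolate the real content as an upper bound for $\gamma(X_0)$ in terms of the geometric gonality $\gamma((X_0)_{\Fbar_q})$, to be combined with the invariance of geometric gonality under extension of algebraically closed fields and the inequality $\gamma(X_{\kbar}) \le \gamma(X_k)$. But that comparison step is exactly where the difficulty lies, and you leave it unresolved. The linear bound you posit, $\gamma(X_0) \le C_q\,\gamma((X_0)_{\Fbar_q})$, is not an established result, and your proposed proof --- multiplying the Galois conjugates of a minimal-degree map --- yields a map whose degree grows with the degree of the field of definition of that pencil, as you yourself concede. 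What is actually available, and what the paper uses, is \cite{Poonen2007-gonality}*{Theorem~2.5(iii)} together with Proposition~A.1(ii) there, giving the quadratic bound $\gamma(X_0) \le \gamma((X_0)_{\Fbar_q})^2$ and hence $\gamma(X_k) \ge \sqrt{\gamma(X_0)}$; any increasing bound suffices for the lemma, so the weaker quadratic form is perfectly adequate, and you should not insist on a linear one you cannot prove.

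The second, more substantive omission is the hypothesis under which that descent theorem applies: the smooth projective model of $X_0$ must have an $\F_q$-point. Without some rational structure there is a genuine obstruction to descending a low-degree pencil from $\Fbar_q$ to $\F_q$, so this cannot be waved away. The paper flags it explicitly and defers the verification to Section~\ref{S:proof of gonality in char p}, where Lemma~\ref{L:symbolic dynamics} shows that every point of $X_0$ above $c=\infty$ is an $\F_q$-point (of ramification index $d$). Your proof needs both of these inputs --- the correct form of the gonality-comparison theorem and the existence of an $\F_q$-rational point on each dynatomic curve --- to close.
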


\begin{proof}
Theorem~\ref{T:gonality}\eqref{I:geometrically irreducible}  for $\F_q$ 
implies that the dynatomic curves over $k$
are just the base extensions of the dynatomic curves $X$ over $\F_q$,
and that they are geometrically irreducible too.
We will prove in Section~\ref{S:proof of gonality in char p} 
that the smooth projective model of each $X$ has an $\F_q$-point.
Then Theorem~2.5(iii) and Proposition~A.1(ii) of~\cite{Poonen2007-gonality}
imply $\gamma(X_k) \ge \sqrt{\gamma(X)}$, which implies
Theorem~\ref{T:gonality}\eqref{I:gonality of dynatomic curves} for $k$.
\end{proof}

\subsection{Symbolic dynamics}
\label{S:symbolic dynamics}

In order to prove the finiteness of the set of dynatomic curves
of bounded degree over the $c$-line,
and to prove that the dynatomic curves of higher degree
have many $\F_q$-points above $c=\infty$,
we need to analyze the splitting above $c=\infty$ in dynatomic curves.
This splitting is given in Lemma~\ref{L:symbolic dynamics} below.
The symbolic dynamics approach yielding Lemma~\ref{L:symbolic dynamics} 
is standard (cf.~\cite{Morton1996}*{Lemma 1}),
but the ways that Lemma~\ref{L:symbolic dynamics} 
will be applied in subsequent sections are not standard.

View $f(z)$ as a polynomial in $z$ over the local field $\F_q((c^{-1}))$.
Normalize the valuation $v$ on $\F_q((c^{-1}))$ so that $v(c^{-1})=1$,
and extend $v$ to an algebraic closure.
Let $t=c^{-1/d}$, 
so $\F_q((t))$ is a degree~$d$ totally tamely ramified extension 
of $\F_q((c^{-1}))$.

\begin{lemma}
\label{L:symbolic dynamics}
\hfill
\begin{enumerate}[\upshape (a)]
\item \label{I:separable}
For any nonnegative integers $n>m$,
the polynomial $f^n(z)-f^m(z)$ over $\F_q(c)$ is separable
and splits completely over $\F_q((t))$.
\item \label{I:valuation}
Each zero of $f^n(z)-f^m(z)$ has valuation $-1/d$
and generates $\F_q((t))$ over $\F_q((c^{-1}))$.
\end{enumerate}
\end{lemma}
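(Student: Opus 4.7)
The plan is to combine Newton polygon analysis (to pin down the valuations of all roots) with a multivariable Hensel argument (to show each root lies in $\F_q((t))$); separability will then follow from counting $d^n$ distinct solutions.

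I would first treat the valuation claim. Using the weighted grading on $\F_q[z,c]$ with $\deg z = 1$ and $\deg c = d$, a quick induction on $n$ shows that the coefficient of $z^j$ in $f^n(z)$ is a polynomial in $c$ of degree at most $(d^n - j)/d$. The same bound holds for $f^n(z) - f^m(z)$, with equality at $j = 0$ because $f^n(0) - f^m(0)$ has leading $c$-term $c^{d^{n-1}}$ when $n > m$. With normalization $v(c^{-1}) = 1$, the Newton polygon of $f^n(z) - f^m(z)$ is then the single segment from $(0, -d^{n-1})$ to $(d^n, 0)$ of slope $1/d$, so every root in $\overline{\F_q((c^{-1}))}$ has valuation exactly $-1/d$. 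This also settles the ``generates $\F_q((t))$'' clause: once I show each root $\alpha$ lies in $\F_q((t))$, the subfield $\F_q((c^{-1}))(\alpha)$ is ramified of degree at least $d$, and since $[\F_q((t)) : \F_q((c^{-1}))] = d$ we must have $\F_q((c^{-1}))(\alpha) = \F_q((t))$.

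The main task is constructing the roots explicitly in $\F_q((t))$; I would treat the full orbit of a root simultaneously. Writing $f^i(\alpha) = \tilde u_i/t$ with $\tilde u_i \in \F_q[[t]]$ (reflecting valuation $-1$), the conditions $f^{i+1}(\alpha) = f(f^i(\alpha))$ for $i = 0, \ldots, n-2$ together with $f^n(\alpha) = f^m(\alpha)$ translate into the system
\[
\tilde u_i^d + 1 - t^{d-1}\, \tilde u_{i+1} = 0\ (i = 0, \ldots, n-2), \qquad \tilde u_{n-1}^d + 1 - t^{d-1}\, \tilde u_{m} = 0
\]
in $\F_q[[t]]$. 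Modulo $t$ the system becomes $\tilde u_i^d = -1$ independently at each index, so it has exactly $d^n$ solutions in $\F_q^n$, one for each tuple of $d$-th roots of $-1$. The Jacobian of this system in the variables $\tilde u_i$ at $t = 0$ is diagonal with entries $d\, \tilde u_i^{d-1}$, invertible because $p \nmid d$ and $\tilde u_i \neq 0$. Multivariable Hensel then lifts each mod-$t$ solution uniquely to a solution in $\F_q[[t]]^n$, producing $d^n$ distinct roots $\alpha = \tilde u_0/t \in \F_q((t))$ of $f^n(z) - f^m(z)$.

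Since the $z$-degree of $f^n(z) - f^m(z)$ is also $d^n$, these are all its roots; the polynomial splits completely in $\F_q((t))$ and is separable (distinct itineraries $(\tilde u_0, \ldots, \tilde u_{n-1})$ give distinct roots). The main difficulty I anticipate is setting up and verifying the cyclic system: the key cancellation is that every off-diagonal coupling carries the factor $t^{d-1}$ with $d - 1 \ge 1$, so the couplings vanish modulo $t$, leaving the simple diagonal Jacobian that makes multivariable Hensel applicable in one shot. This bypasses the naive reverse-iteration approach, which would require constructing periodic orbits of each period separately and runs into the problem that the dynatomic factors of $f^N(z) - z$ collide in the mod-$t$ reduction. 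The argument also requires $\F_q$ to contain a $d$-th root of $-1$, which is automatic for $d$ odd from $-1 \in \F_q$ and otherwise is supplied by the standing assumption $\mu_d \subset \F_q$.
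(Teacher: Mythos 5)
Your proposal is essentially correct but takes a genuinely different route from the paper's. The paper argues by symbolic dynamics: the $d$ inverse branches $z \mapsto (-c)^{1/d}(1-c^{-1}z)^{1/d}$ of $f$ are contracting self-maps of the disk $\{v(z)>v(c)\}$ with disjoint images, each infinite sequence of branches determines a point, $f$ acts as the shift, and the roots of $f^n(z)-f^m(z)$ are the $d^n$ points attached to sequences with $s_{i+n}=s_{i+m}$; the valuation $-1/d$ is then read off from the images of the branches. You instead obtain the valuation of \emph{all} roots up front from the Newton polygon (your weighted-degree bound on the coefficient of $z^j$ and the exact degree $d^{n-1}$ of the constant term are both correct, so the polygon really is a single segment), and you obtain the splitting from multivariable Hensel applied to the cyclic system $\tilde u_i^d+1=t^{d-1}\tilde u_{i+1}$. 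These are two packagings of the same fixed-point idea --- your residue tuples $(\bar u_0,\dots,\bar u_{n-1})$ with $\bar u_i^d=-1$ are exactly the paper's itineraries --- but your version is more self-contained, and the Newton polygon gives the valuation statement without first having to locate all $d^n$ roots.

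The one flawed step is your last sentence: $\mu_d\subset\F_q$ does \emph{not} supply a $d$th root of $-1$ when $d$ is even. For example, $d=2$, $q=7$: $\mu_2\subset\F_7$ but $-1\notin(\F_7^\times)^2$, so your residual system $\bar u_i^d=-1$ has no solutions in $\F_7$; and indeed $z^2-z+c$ does not split over $\F_7((t))$, since its splitting field is $\F_7((c^{-1}))(\sqrt{-c^{-1}})$, which differs from $\F_7((c^{-1/2}))$ precisely because $-1$ is a nonsquare mod $7$. You should be aware, however, that the paper's own proof quietly uses the same fact --- its branches are centered at the $d$th roots of $-c=-t^{-d}$, which lie in $\F_q((t))$ only when $(-1)^{1/d}\in\F_q$ --- so this is really a defect in the hypotheses of the lemma rather than in your method: one must enlarge $\F_q$ to $\F_p(\mu_d,(-1)^{1/d})$ when $d$ is even (a harmless change for the applications in Section~4). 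With that adjustment, your argument is complete.
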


\begin{proof}
The ideas in the following argument are well known; 
cf.~\cite{Morton1996}*{Lemma 1}.
\begin{enumerate}[\upshape (a)]
\item 
For each $d$th root of $-c$, 
interpreting $(-c+z)^{1/d}$
as $(-c)^{1/d}(1- c^{-1} z)^{1/d}$ and expanding $(1- c^{-1} z)^{1/d}$
in a binomial series
defines a branch of the inverse of $z^d+c$
on the open disk $D \colonequals \{z \in \F_q((t)) : v(z) > v(c)\}$.
Taking the derivative of $(-c+z)^{1/d}$
shows that each branch is a contracting map $D \to D$.
These branches have disjoint images, each a smaller open disk around
a different $d$th root of $-c$.
Let $S$ be the set of these $d$ functions.
Each finite sequence of elements of $S$
defines a composition of functions,
and for each $m$, the images of the different $m$-fold compositions 
are disjoint open disks.
For each infinite sequence $s_1,s_2,\cdots$ of elements of $S$,
the images of $s_1 \cdots s_m$ for $m \ge 1$
are nested open disks whose radii tend to $0$,
so they have a unique point in their intersection;
denote it $[s_1 s_2 \cdots]$.
Any two distinct infinite sequences yield two points in disjoint disks,
so these points are distinct.
Since $f \circ s_1$ is the identity,
$f$ maps $[s_1 s_2 \cdots]$ to $[s_2 s_3 \cdots]$.
For fixed nonnegative integers $n > m$, 
any $n$-long sequence $s_1,\ldots,s_n$ in $S$ extends uniquely
to an infinite sequence $(s_i)$ satisfying $s_{i+n}=s_{i+m}$ for all $i \ge 1$,
and then $[s_1 s_2 \cdots]$ is a zero of $f^n(z)-f^m(z)$.
There are $d^n$ of these, so they are \emph{all} the zeros.
In particular, these zeros are distinct elements of $\F_q((t))$.
This implies that $f^n(z)-f^m(z)$ is separable.
\item 
The image of each $s \in S$ consists of elements of valuation exactly $-1/d$.
Thus each element $[s_1 s_2 \cdots]$ has valuation $-1/d$.
In particular, each zero of $f^n(z)-f^m(z)$ 
generates an extension field of $\F_q((c^{-1}))$ 
of ramification index divisible by $d$; 
this extension field can only be the whole field $\F_q((t))$.\qedhere
\end{enumerate}
\end{proof}

\subsection{Dynatomic curves of low degree}

It is here that we use the trick requiring the ground field to be finite.

\begin{lemma}
\label{L:low degree factors}
For each $e \ge 1$, 
the set of dynatomic curves $X$ over $\F_q$
such that ${\deg(X \stackrel{c}\to \PP^1)} = e$ is finite.
\end{lemma}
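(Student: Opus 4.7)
The plan is to show that, for fixed $e$, the monic defining polynomial $\Phi_X(z,c) \in \F_q[c][z]$ of any such dynatomic curve $X$ lies in a finite subset of $\F_q[c,z]$; the finiteness of $\F_q$ then yields the conclusion. Notably, this approach avoids having to classify the dynatomic curves explicitly, which is important since in positive characteristic the factorizations of $f^n(z)-f^m(z)$ are not known. Since $X$ is cut out by an irreducible factor of some $f^n(z) - f^m(z)$ over $\F_q(c)$, and $f^n - f^m \in \F_q[c][z]$ is monic of degree $d^n$ in $z$, Gauss's lemma allows me to take $\Phi_X$ to be monic in $z$ with coefficients in $\F_q[c]$; write $\Phi_X = z^e + \sum_{i=0}^{e-1} a_i(c) z^i$.

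Next, I would apply the Newton polygon of $\Phi_X$ with respect to the valuation $v$ at $c = \infty$, normalized so that $v(c^{-1}) = 1$ and hence $v(p) = -\deg_c p$ for $p \in \F_q[c]$. By Lemma~\ref{L:symbolic dynamics}\eqref{I:valuation}, every zero of $\Phi_X$ in $\overline{\F_q(c)}$ lies in $\F_q((t))$ with $v$-valuation exactly $-1/d$. Since all $e$ roots of $\Phi_X$ share this valuation, the Newton polygon is a single segment from $(0, -e/d)$ to $(e, 0)$ of slope $1/d$, so every coefficient satisfies $v(a_i) \ge -(e-i)/d$, i.e.\ $\deg_c a_i \le (e-i)/d$.

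For each $i$, the set of polynomials in $\F_q[c]$ of $c$-degree at most $\lfloor (e-i)/d \rfloor$ is finite, so there are only finitely many candidates for $\Phi_X$, hence only finitely many dynatomic curves $X$ of the given degree. The main technical input is Lemma~\ref{L:symbolic dynamics}, and beyond that there is no substantial obstacle; the only step requiring a minor check is the appeal to Gauss's lemma to place $\Phi_X$ in $\F_q[c][z]$ as a monic polynomial. The use of the finiteness of $\F_q$ is essential, since a bound on $c$-degrees alone does not bound the coefficients over an infinite ground field, which is precisely why this trick belongs to the finite-field setting.
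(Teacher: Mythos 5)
Your proof is correct and is essentially the paper's own argument: both use Gauss's lemma to put the monic irreducible factor in $\F_q[c][z]$, use Lemma~\ref{L:symbolic dynamics}\eqref{I:valuation} to see that every root has valuation $-1/d$, deduce the coefficient bound $\deg_c a_i \le (e-i)/d$ (the paper phrases this via elementary symmetric functions rather than the Newton polygon, which is the same computation), and conclude by finiteness of $\F_q$. No gaps.
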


\begin{proof}
Suppose that $Q(z) = \sum_{r=0}^e q_r z^{e-r}$ 
is a monic degree~$e$ factor of $f^n(z)-f^m(z)$ over $\F_q(c)$
for some $n$ and $m$.
For each $r$, the coefficient $q_r$
is the $r$th elementary symmetric polynomial evaluated at the negatives
of the zeros of $Q$; those zeros have valuation $-1/d$
by Lemma~\ref{L:symbolic dynamics}\eqref{I:valuation}, 
so $v(q_r) \ge -r/d$.
On the other hand, by Gauss's lemma, $q_r \in \F_q[c]$,
so $\deg q_r \le r/d$.
Thus there are only finitely many possibilities for each $q_r$,
and hence finitely many possibilities for $Q$,
each of which yields one dynatomic curve.
\end{proof}

\subsection{Gonality of dynatomic curves}
\label{S:proof of gonality in char p} 

\begin{proof}[Proof of Theorem~\ref{T:gonality}]
By Lemma~\ref{L:reduction}, we may assume that $k=\F_q$.
\begin{enumerate}[\upshape (a)]
\item 
Let $X$ be the smooth projective model of a dynatomic curve, 
corresponding to a factor of $f^n(z)-f^m(z)$ for some $n$ and $m$.
By Lemma~\ref{L:symbolic dynamics}\eqref{I:valuation}, 
$f^n(z)-f^m(z)$ splits completely over $\F_q((t))$,
and the preimage of $\infty$ under $X \stackrel{c}\to \PP^1$
consists of $\F_q$-points, each of ramification index $d$.
Every irreducible component $Z$ of $X_{\Fbar_q}$ dominates $\PP^1$ via $c$
and hence must contain one of those $\F_q$-points, say $x$.
Then each $\Gal(\Fbar_q/\F_q)$-conjugate of $Z$ contains $x$.
On the other hand, since $X_{\Fbar_q}$ is smooth, 
its irreducible components are disjoint.
Thus $Z$ equals each of its conjugates, 
so $Z$ descends to an irreducible component of $X$,
which must be $X$ itself.
(This proves also that $X$ has an $\F_q$-point,
as promised in the proof of Lemma~\ref{L:reduction}.)
\item 
To bound gonality from below, 
we use Ogg's method of counting points over a finite field;
cf.~\cite{Ogg1974}, \cite{Poonen2007-gonality}*{Section~3}, 
and \cite{Doyle-et-al-preprint}*{Section~4}.
Let $X$ be the smooth projective model of a dynatomic curve.
Let $e = \deg(X \stackrel{c}\to \PP^1)$.
Each preimage of $\infty$ in $X$ is an $\F_q$-point of ramification index~$d$,
so there are $e/d$ such points.
On the other hand, $\PP^1$ has only $q+1$ points over $\F_q$,
so any nonconstant morphism $X \to \PP^1$
has degree at least $e/(d(q+1))$.
As $X$ varies, $e \to \infty$ by Lemma~\ref{L:low degree factors}.\qedhere
\end{enumerate}
\end{proof}

\section{Strong uniform boundedness of preperiodic points}
\label{S:uniform boundedness}

\begin{proof}[Proof of Theorem~\ref{T:uniform boundedness}]
Without loss of generality, $K=k(u)$ for some indeterminate $u$.
Given $L$, let $Y$ be the smooth projective integral curve over $k$
with function field $L$. 
The condition $[L:K] \le D$ implies that $Y$ has gonality at most $D$,
so each irreducible component of $Y_{\kbar}$ has gonality at most $D$.
For $c \in L$ not algebraic over $k$, 
if $z \in L$ and $n>m$ satisfy $f^n(z)-f^m(z)=0$,
then $(z,c)$ is a nonconstant and hence smooth $L$-point
of the curve $f^n(z)-f^m(z)=0$ in $\Aff^2$,
so it yields an $L$-point on 
a dynatomic curve $X$ corresponding to a factor of $f^n(z)-f^m(z)$.
This $L$-point defines a nonconstant $k$-morphism $Y \to X$,
so $\gamma(X) \le \gamma(Y) \le D$.
By Theorem~\ref{T:gonality}\eqref{I:gonality of dynatomic curves}, 
this places a uniform bound on $n$.
For each $n$, the number of preperiodic points of $z^d+c$ 
corresponding to that value of $n$
is uniformly bounded by $\sum_{m=0}^{n-1} \deg(f^n(z)-f^m(z)) = n d^n$,
so bounding $n$ bounds the number of preperiodic points too.

If $k$ is finite 
and $c$ lies in the maximal algebraic extension $\ell$ of $k$ in $L$,
then all the preperiodic points of $x^d+c$ in $L$ are in $\ell$,
but $[\ell:k] \le D$, so the number of preperiodic points
is uniformly bounded by $(\#k)^D$.
\end{proof}

To prove Theorem~\ref{T:number fields}, 
we need the following result of Frey \cite{Frey1994}*{Proposition 2}, 
which in turn relies on Faltings's theorems on rational points on 
subvarieties of abelian varieties.

\begin{lemma}\label{L:frey}
Let $C$ be a curve defined over a number field $K$.
Let $D \ge 1$. 
If there are infinitely many points $P \in C(\Kbar)$ 
of degree $\le D$ over $K$, 
then $\gamma(C) \le 2D$.
\end{lemma}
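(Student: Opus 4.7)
The plan is to combine the symmetric power construction, Faltings's theorem on rational points of subvarieties of abelian varieties, and a classical result of Abramovich--Harris (as refined by Debarre--Fahlaoui) classifying curves whose Brill--Noether image contains a translate of a positive-dimensional abelian subvariety.

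First I would reduce to a single symmetric power: by pigeonhole, the hypothesis of infinitely many points of degree $\le D$ forces infinitely many points of some particular degree $e$ with $1 \le e \le D$. Each such $P \in C(\Kbar)$ with $[K(P):K]=e$ gives a $K$-rational effective divisor of degree $e$, namely the $\Gal(\Kbar/K)$-orbit of $P$, and hence a $K$-point of the $e$-th symmetric power $C^{(e)}$. Distinct $P$ yield distinct divisors, so $C^{(e)}(K)$ is infinite.

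Next, fix a base point and consider the Abel--Jacobi morphism $\phi \colon C^{(e)} \to J := \Jac(C)$ with image $W_e$. By Faltings's theorem, the Zariski closure of $\phi(C^{(e)}(K))$ in $J$ is a finite union of translates of abelian subvarieties of $J$, so infinitude of $C^{(e)}(K)$ forces some translate $a + A$ to satisfy $\dim A \ge 1$. Thus $W_e$ contains a translate of a positive-dimensional abelian subvariety of $J$. The Abramovich--Harris/Debarre--Fahlaoui classification then yields one of two alternatives: either $C$ admits a $g^1_e$, hence a dominant morphism $C \to \PP^1$ of degree $\le e$, giving $\gamma(C) \le e \le D \le 2D$; or $C$ admits a dominant morphism $\psi \colon C \to E$ of degree $\le e$ onto an elliptic curve $E$, in which case composing $\psi$ with any degree-$2$ map $E \to \PP^1$ gives $\gamma(C) \le 2e \le 2D$.

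The main obstacle is invoking the Abramovich--Harris/Debarre--Fahlaoui classification cleanly: the factor of $2$ in the bound is genuine and comes precisely from the elliptic alternative, where post-composing $\psi$ with a degree-$2$ cover $E \to \PP^1$ is unavoidable. A minor subsidiary point is handling small cases (e.g.\ $e=1$, in which $C$ already has infinitely many $K$-rational points and so is rational, giving $\gamma(C) = 1$), but these fall out of the classification directly.
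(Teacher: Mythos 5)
Your reduction to the symmetric power and the application of Faltings's theorem match the beginning of Frey's argument (the paper does not reprove this lemma; it cites \cite{Frey1994}*{Proposition 2}). But the step where you invoke an ``Abramovich--Harris/Debarre--Fahlaoui classification'' asserting that a translate of a positive-dimensional abelian subvariety inside $W_e$ forces either a $g^1_e$ or a degree-$\le e$ map onto an elliptic curve is a genuine gap: that dichotomy is precisely the Abramovich--Harris conjecture, which Debarre and Fahlaoui \emph{disproved} for large $e$ (they construct curves with a translate of an elliptic curve inside $W_e$ admitting no map of degree $\le e$ to $\PP^1$ or to any elliptic curve; the conjecture is only known for $e \le 4$). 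So the elliptic alternative, which you identify as the source of the factor $2$, is not available as a theorem. There is also a smaller slip earlier: infinitude of $C^{(e)}(K)$ does not by itself force the Zariski closure of $\phi(C^{(e)}(K))$ to contain a positive-dimensional translate, since $\phi$ could collapse the infinite set to finitely many points; in that case some fiber of $\phi$ is a positive-dimensional linear system and you win immediately with a $g^1_e$, but the case must be separated out.

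The correct route (Frey's) is elementary once Faltings gives $x + B \subseteq W_e$ with $\dim B \ge 1$: for each $b \in B$ choose effective degree-$e$ divisors $D_b$, $D_{-b}$ with $\phi(D_{\pm b}) = x \pm b$. Then all the degree-$2e$ divisors $D_b + D_{-b}$ lie in the single fiber of $C^{(2e)} \to J$ over $2x$, and they form a positive-dimensional family, so that fiber --- a complete linear system --- has dimension $\ge 1$. A positive-dimensional linear system of degree $2e$ contains a base-point-free pencil of degree $\le 2e$, whence $\gamma(C) \le 2e \le 2D$. The factor $2$ thus comes from adding a divisor to its image under the involution $b \mapsto -b$ of $B$, not from a double cover of an elliptic curve. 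Your overall bound is correct, but the middle of the argument needs to be replaced by this construction.
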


\begin{proof}[Proof of Theorem~\ref{T:number fields}]
For each $(m,n)$, let $S_{m,n}$ be the set of $(z_0,c) \in \Qbar \times \Qbar$
such that $z_0$ and $c$ belong to some number field $K$ of degree $\le D$
and iteration of $z^d+c$ maps $z_0$ into a cycle of length exactly $n$ 
after exactly $m$ steps;
thus $S_{m,n} \subseteq Y_1(m,n)(\Qbar)$.
Suppose that the conclusion fails; then for some $n \le N$, 
there exist infinitely many $m \ge 1$ for which $S_{m,n}$ is nonempty.
Fix such an $n$.

Let $m_0 \ge 1$.
By choice of $n$, the disjoint union $\coprod_{m \ge m_0} S_{m,n}$ is infinite.
For each $m \ge m_0$, the $c$-coordinate map $Y_1(m,n) \to \Aff^1$
factors through $Y_1(m_0,n)$, so we obtain maps of sets 
\begin{equation}
\label{E:composition}
	\coprod_{m \ge m_0} S_{m,n} \To S_{m_0,n} \To \Qbar.
\end{equation}
On the other hand, by Northcott's theorem \cite{Northcott1950}*{Theorem~3}, 
for any given $c \in \Qbar$,
the set of preperiodic points of $x^d+c$ of degree $\le D$ over $\Q$ is finite;
thus each $c \in \Qbar$ has finite preimage 
under the composition~\eqref{E:composition}.
Hence the image of~\eqref{E:composition} is infinite,
so $S_{m_0,n}$ is infinite.
Thus some $\Q(\mu_d)$-irreducible component $Y_1(m_0,n)^\zeta$ of $Y_1(m_0,n)$
contains infinitely many points of degree $\le D$ over $\Q(\mu_d)$.
By Lemma~\ref{L:frey}, $\gamma(Y_1(m_0,n)^\zeta) \le 2D$.

The previous paragraph applies for every integer $m_0 \ge 1$, contradicting 
Theorem~\ref{T:gonality}\eqref{I:gonality of dynatomic curves}.
\end{proof}


\section*{Acknowledgements} 

We thank Joseph Gunther, Holly Krieger, Andrew Obus, Padmavathi Srinivasan, 
Isabel Vogt, and Robin Zhang for comments and discussions.

\begin{bibdiv}
\begin{biblist}


\bib{Bousch1992}{thesis}{
	author = {Bousch, Thierry},
	title = {Sur quelques probl\`{e}mes de dynamique holomorphe},
	school = {PhD thesis, Universit\'{e} de Paris-Sud, Centre d'Orsay},
	year = {1992},
}


\bib{Doyle-et-al-preprint}{misc}{
  author={Doyle, John R.},
  author={Krieger, Holly},
  author={Obus, Andrew},
  author={Pries, Rachel},
  author={Rubinstein-Salzedo, Simon},
  author={West, Lloyd},
  title={Reduction of dynatomic curves},
  date={2017-03-29},
  note={Preprint, \texttt {arXiv:1703.04172v2}\phantom {i}},
}

\bib{Epstein2012}{article}{
   author={Epstein, Adam},
   title={Integrality and rigidity for postcritically finite polynomials},
   note={With an appendix by Epstein and Bjorn Poonen},
   journal={Bull. Lond. Math. Soc.},
   volume={44},
   date={2012},
   number={1},
   pages={39--46},
   issn={0024-6093},
   review={\MR{2881322}},
}

\bib{Frey1994}{article}{
   author={Frey, Gerhard},
   title={Curves with infinitely many points of fixed degree},
   journal={Israel J. Math.},
   volume={85},
   date={1994},
   number={1-3},
   pages={79--83},
   issn={0021-2172},
   review={\MR{1264340}},
}

\bib{Gao2016}{article}{
   author={Gao, Yan},
   title={Preperiodic dynatomic curves for $z\mapsto z^d+c$},
   journal={Fund. Math.},
   volume={233},
   date={2016},
   number={1},
   pages={37--69},
   issn={0016-2736},
   review={\MR{3460633}},
}

\bib{HutzTowsley2015}{article}{
   author={Hutz, Benjamin},
   author={Towsley, Adam},
   title={Misiurewicz points for polynomial maps and transversality},
   journal={New York J. Math.},
   volume={21},
   date={2015},
   pages={297--319},
   issn={1076-9803},
   review={\MR{3358544}},
}

\bib{Lau-Schleicher1994}{article}{
author = {Lau, Eike},
author = {Schleicher, Dierk},
journal = {SUNY Stony Brook Preprint},
title = {Internal addresses in the Mandelbrot set and irreducibility of polynomials},
volume = {19},
year = {1994}
}

\bib{Merel1996}{article}{
  author={Merel, Lo{\"{\i }}c},
  title={Bornes pour la torsion des courbes elliptiques sur les corps de nombres},
  language={French},
  journal={Invent. Math.},
  volume={124},
  date={1996},
  number={1-3},
  pages={437\ndash 449},
  issn={0020-9910},
  review={\MR {1369424 (96i:11057)}},
}

\bib{Morton1996}{article}{
   author={Morton, Patrick},
   title={On certain algebraic curves related to polynomial maps},
   journal={Compositio Math.},
   volume={103},
   date={1996},
   number={3},
   pages={319--350},
   issn={0010-437X},
   review={\MR{1414593}},
}

\bib{Morton1998}{article}{
   author={Morton, Patrick},
   title={Galois groups of periodic points},
   journal={J. Algebra},
   volume={201},
   date={1998},
   number={2},
   pages={401--428},
   issn={0021-8693},
   review={\MR{1612390}},
   doi={10.1006/jabr.1997.7304},
}

\bib{Morton-Silverman1994}{article}{
  author={Morton, Patrick},
  author={Silverman, Joseph H.},
  title={Rational periodic points of rational functions},
  journal={Internat. Math. Res. Notices},
  date={1994},
  number={2},
  pages={97--110},
  issn={1073-7928},
  review={\MR {1264933 (95b:11066)}},
  doi={10.1155/S1073792894000127},
}

\bib{Nguyen-Saito1996preprint}{article}{
    author={Nguyen, Khac Viet},
    author={Saito, Masa-Hiko},
     title={$d$-gonality of modular curves and bounding torsions},
      date={1996-03-29},
      note={Preprint, \texttt{arXiv:alg-geom/9603024}\phantom{i}},
}

\bib{Northcott1950}{article}{
   author={Northcott, D. G.},
   title={Periodic points on an algebraic variety},
   journal={Ann. of Math. (2)},
   volume={51},
   date={1950},
   pages={167--177},
   issn={0003-486X},
   review={\MR{0034607 (11,615c)}},
}

\bib{Ogg1974}{article}{
  author={Ogg, Andrew P.},
  title={Hyperelliptic modular curves},
  journal={Bull. Soc. Math. France},
  volume={102},
  date={1974},
  pages={449\ndash 462},
  review={\MR {0364259 (51 \#514)}},
}

\bib{Poonen2007-gonality}{article}{
  author={Poonen, Bjorn},
  title={Gonality of modular curves in characteristic $p$},
  journal={Math. Res. Lett.},
  volume={14},
  date={2007},
  number={4},
  pages={691--701},
  issn={1073-2780},
  review={\MR {2335995}},
  doi={10.4310/MRL.2007.v14.n4.a14},
}

\bib{Schleicher2017}{article}{
   author={Schleicher, Dierk},
   title={Internal addresses of the Mandelbrot set and Galois groups of
   polynomials},
   journal={Arnold Math. J.},
   volume={3},
   date={2017},
   number={1},
   pages={1--35},
   issn={2199-6792},
   review={\MR{3646529}},
}

\bib{Stichtenoth1993}{book}{
  author={Stichtenoth, Henning},
  title={Algebraic function fields and codes},
  series={Universitext},
  publisher={Springer-Verlag},
  place={Berlin},
  date={1993},
  pages={x+260},
  isbn={3-540-56489-6},
  review={\MR {1251961 (94k:14016)}},
}

\end{biblist}
\end{bibdiv}

\end{document}